\documentclass[12p]{amsart}
\usepackage{amssymb}
\usepackage{amsmath}
\usepackage{amsfonts}
\usepackage{geometry}
\usepackage{graphicx}
\usepackage{mathrsfs,amssymb}

\usepackage{hyperref}
\usepackage{cleveref}



\theoremstyle{plain}

\newtheorem{lemma}{Lemma}

\newtheorem{remark}{Remark}

\newtheorem{theorem}{Theorem}
\numberwithin{equation}{section}

\begin{document}

\title{Scattering for the defocusing, cubic nonlinear Schr{\"o}dinger equation with initial data in a critical space}

\author{Benjamin Dodson}

\begin{abstract}
In this note we prove scattering for a defocusing nonlinear Schr{\"o}dinger equation with initial data lying in a critical Besov space. In addition, we obtain polynomial bounds on the scattering size as a function of the critical Besov norm.
\end{abstract}
\maketitle

\section{Introduction}
The qualitative long time behavior for the defocusing, nonlinear Schr{\"o}dinger equation
\begin{equation}\label{1.1}
i u_{t} + \Delta u = |u|^{p - 1} u, \qquad u(0,x) = u_{0},
\end{equation}
is completely worked out in the mass-critical ($p = \frac{4}{d} + 1$) and energy-critical ($p = \frac{4}{d - 2} + 1$) cases. Indeed, $(\ref{1.1})$ is known to be globally well-posed and scattering for any initial data $u_{0} \in \dot{H}^{s_{c}}(\mathbb{R}^{d})$,
\begin{equation}\label{1.2}
s_{c} = \frac{d}{2} - \frac{2}{p - 1},
\end{equation}
in the mass-critical (\cite{dodson2012global}, \cite{dodson2016global}, \cite{dodson2016global2}, \cite{MR3930589}, \cite{tao2007global}, \cite{killip2009cubic}, \cite{killip2009mass}) and energy-critical cases (\cite{bourgain1999global}, \cite{colliander2008global}, \cite{ryckman2007global}, \cite{visan2007defocusing}, \cite{killip2013nonlinear}, \cite{MR2154347}). The critical exponent $(\ref{1.2})$ arises from the fact that if $u(t,x)$ solves $(\ref{1.1})$, then for any $\lambda > 0$,
\begin{equation}\label{1.9}
u(t,x) \mapsto \lambda^{\frac{2}{p - 1}} u(\lambda^{2} t, \lambda x),
\end{equation}
also solves $(\ref{1.1})$, and the $\dot{H}^{s_{c}}$ norm of the initial data is invariant under $(\ref{1.9})$. On the other hand, well-posedness fails for $s < s_{c}$, see \cite{christ2003asymptotics}.

\begin{remark}
In this paper, global well-posedness refers to the existence of a global strong solution, i.e., a solution that satisfies Duhamel's principle
\begin{equation}\label{1.3}
u(t) = e^{it \Delta} u_{0} - i \int_{0}^{t} e^{i(t - \tau) \Delta} |u(\tau)|^{p - 1} u(\tau) d\tau,
\end{equation}
that is continuous in time, and depends continuously on the initial data. Scattering refers to the existence of $u_{+}$, $u_{-} \in \dot{H}^{s_{c}}(\mathbb{R}^{d})$ such that
\begin{equation}\label{1.4}
\lim_{t \nearrow \infty} \| u(t) - e^{it \Delta} u_{+} \|_{\dot{H}^{s_{c}}(\mathbb{R}^{d})} = 0,
\end{equation}
and
\begin{equation}\label{1.5}
\lim_{t \searrow -\infty} \| u(t) - e^{it \Delta} u_{-} \|_{\dot{H}^{s_{c}}(\mathbb{R}^{d})} = 0.
\end{equation}
See Chapter three of \cite{tao2006nonlinear} for a detailed treatment of global well-posedness and scattering for dispersive partial differential equations in general.
\end{remark}

The case when $p = \frac{4}{d} + 1$, ($s_{c} = 0$) is called mass-critical because a solution to $(\ref{1.1})$ preserves the mass, or $L^{2}$ norm of a solution,
\begin{equation}\label{1.7}
M(u(t)) = \int |u(t,x)|^{2} dx = M(u(0)).
\end{equation}
Likewise, the case when $p = \frac{4}{d - 2} + 1$, ($s_{c} = 1$) is called energy critical because a solution to $(\ref{1.1})$ preserves the energy,
\begin{equation}\label{1.8}
E(u(t)) = \int [\frac{1}{2} |\nabla u(t,x)|^{2} + \frac{1}{p + 1} |u(t,x)|^{p + 1}] dx = E(u(0)).
\end{equation}
The conserved quantities $(\ref{1.7})$ and $(\ref{1.8})$ imply that the $\dot{H}^{s_{c}}(\mathbb{R}^{d})$ norm is uniformly bounded for the entire time of existence of the solution to $(\ref{1.1})$ in the mass-critical and energy-critical cases. Thus, in the mass-critical and energy-critical cases, the proof of global well-posedness and scattering reduces to proving global well-posedness and scattering for a solution to $(\ref{1.1})$ with uniformly bounded $\dot{H}^{s_{c}}(\mathbb{R}^{d})$, which has been done.
\begin{remark}
In the energy-critical case, the Sobolev embedding theorem implies that $E(u(0)) < \infty$ when $u_{0} \in \dot{H}^{1}(\mathbb{R}^{d})$.
\end{remark}

It is conjectured that global well-posedness and scattering also hold for $(\ref{1.1})$ when $0 < s_{c} < 1$. In this case, there is no known conserved quantity that gives uniform bounds on the $\dot{H}^{s_{c}}(\mathbb{R}^{d})$ norm of a solution to $(\ref{1.1})$. Therefore, there are two possible ways in which a solution to $(\ref{1.1})$ might fail to scatter. These are called type one blowup and type two blowup. A solution to $(\ref{1.1})$ is called a type one blowup solution to $(\ref{1.1})$ if the $\dot{H}^{s_{c}}(\mathbb{R}^{d})$ norm is not uniformly bounded. Since $e^{it \Delta}$ is a unitary operator, an unbounded $\dot{H}^{s_{c}}(\mathbb{R}^{d})$ norm automatically precludes $(\ref{1.4})$ or $(\ref{1.5})$ from occurring. A blowup solution to $(\ref{1.1})$ is called a type two blowup solution if the solution fails to scatter but the $\dot{H}^{s_{c}}(\mathbb{R}^{d})$ norm is uniformly bounded for the entire time of its existence.

Type one blowup is known to occur for solutions to $(\ref{1.1})$ for some $d$ and $s_{c} > 1$, see \cite{merle2019blow}. Interestingly, the solutions obtained in  \cite{merle2019blow} have good regularity and good decay. By comparison, when $0 < s_{c} < 1$, if $u_{0} \in H_{x}^{1}(\mathbb{R}^{d})$, where $H_{x}^{1}$ is an inhomogeneous Sobolev space, then $(\ref{1.7})$, $(\ref{1.8})$, and interpolation imply a uniform bound on the $\dot{H}^{s_{c}}(\mathbb{R}^{d})$ norm when $0 < s_{c} < 1$. In fact, global well-posedness and scattering is known for a solution to $(\ref{1.1})$ when $u_{0} \in H_{x}^{1}(\mathbb{R}^{d})$ and $0 < s_{c} < 1$, see \cite{MR876933} and \cite{MR839728}.

Type two blowup has been precluded in many cases for $(\ref{1.1})$ when $0 < s_{c} < 1$. One particularly important case is the cubic nonlinear Schr{\"o}dinger equation in three dimensions (see \cite{erdHos2007derivation}),
\begin{equation}\label{1.6}
i u_{t} + \Delta u = |u|^{2} u, \qquad u : \mathbb{R} \times \mathbb{R}^{3} \rightarrow \mathbb{C}, \qquad u(0,x) = u_{0}.
\end{equation}
In this case, $s_{c} = \frac{3}{2} - \frac{2}{2} = \frac{1}{2}$. The main obstacle to proving scattering for $(\ref{1.6})$ with generic initial data in $\dot{H}^{1/2}(\mathbb{R}^{3})$ is the absence of a conservation law that controls the $\dot{H}^{1/2}$ norm of a solution to $(\ref{1.6})$ with initial data in $\dot{H}^{1/2}$. Observe that the momentum
\begin{equation}\label{1.9.1}
P(u(t)) = \int Im[\bar{u}(t,x) \nabla u(t,x)] dx,
\end{equation}
is conserved and scales like the $\dot{H}^{1/2}$ norm, but does not control the $\dot{H}^{1/2}$ norm of a solution to $(\ref{1.1})$.

In a breakthrough result, \cite{kenig2010scattering} proved that any solution to $(\ref{1.6})$ with $\| u(t) \|_{\dot{H}^{1/2}(\mathbb{R}^{3})}$ uniformly bounded on its entire interval of existence must be globally well-posed and scattering. Indeed, \cite{kenig2010scattering} and \cite{colliander2008global} were key in developing the concentration compactness method for the nonlinear Schr{\"o}dinger equation. Type two blowup was later precluded for a great many cases of $(\ref{1.1})$ when $0 < s_{c} < 1$, see \cite{murphy2015radial}, \cite{murphy2014intercritical}, and \cite{murphy2014defocusing}. Since the mass-critical and energy-critical problems reduce to type two blowup questions, the same techniques are useful for both problems.

In this paper we prove scattering for $(\ref{1.1})$ when $0 < s_{c} < 1$, $1 < p \leq 3$, and the initial data is radially symmetric and in the critical Besov space $B_{1,1}^{\frac{d}{2} + s_{c}}(\mathbb{R}^{d})$.
\begin{theorem}\label{t1.1}
The initial value problem $(\ref{1.1})$ is globally well-posed and scattering for radially symmetric initial data in the Besov space $B_{1,1}^{\frac{d}{2} + s_{c}}(\mathbb{R}^{d})$. In addition, when $1 < p < 3$, the scattering size,
\begin{equation}\label{1.10}
\| u \|_{L_{t}^{\frac{p + 1}{1 - s_{c}}} L_{x}^{p + 1}(\mathbb{R} \times \mathbb{R}^{d})},
\end{equation}
is bounded by a polynomial function of $\| u_{0} \|_{B_{1,1}^{\frac{d}{2} + s_{c}}}$.
\end{theorem}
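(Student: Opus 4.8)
The plan is to reduce the theorem to a single a priori bound on the scattering norm (1.10), and then to establish that bound by playing a monotonicity (Morawetz) estimate against the strong dispersive properties that the critical $L^{1}$-based Besov space confers on the linear flow. By the local well-posedness and stability theory for $\dot H^{s_{c}}$-critical Strichartz spaces (Chapter three of \cite{tao2006nonlinear}), it suffices to prove that every solution satisfies $\| u \|_{L_{t}^{(p+1)/(1-s_{c})} L_{x}^{p+1}} \lesssim (1 + \| u_{0} \|_{B_{1,1}^{d/2 + s_{c}}})^{C}$ for some fixed $C$; global existence and the existence of the asymptotic states $u_{\pm}$ then follow from finiteness of this norm together with Duhamel's formula (1.3). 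The point of departure is the observation that the pair $(q,r) = ((p+1)/(1-s_{c}),\, p+1)$ is $\dot H^{s_{c}}$-admissible, so that for a single Littlewood--Paley piece a dyadic Strichartz estimate combined with Bernstein gives $\| e^{it\Delta} P_{N} u_{0} \|_{L_{t}^{q} L_{x}^{r}} \lesssim N^{s_{c}} \| P_{N} u_{0} \|_{L^{2}} \lesssim N^{d/2 + s_{c}} \| P_{N} u_{0} \|_{L^{1}}$. Summing over dyadic $N$ in $\ell^{1}$ (legitimate since $q, r > 1$) yields the clean linear bound $\| e^{it\Delta} u_{0} \|_{L_{t}^{q} L_{x}^{r}(\mathbb{R} \times \mathbb{R}^{d})} \lesssim \| u_{0} \|_{B_{1,1}^{d/2 + s_{c}}}$. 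This is exactly the feature that makes $B_{1,1}^{d/2 + s_{c}}$ the natural space: the $\ell^{1}$ summation over frequencies converts the dispersive decay of each piece into \emph{global} spacetime control of the free evolution, with a constant linear in the data.

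For small data this linear estimate closes the contraction mapping argument immediately. For large data the free scattering norm is large, and the genuine obstruction is the possible growth of $\| u(t) \|_{\dot H^{s_{c}}}$, i.e.\ type one blowup. To control this I would introduce a Morawetz/virial quantity adapted to the $\dot H^{s_{c}}$ scaling; for radial solutions a Lin--Strauss-type estimate controls a weighted spacetime norm such as $\int \int |x|^{-1} |u(t,x)|^{p+1} \, dx \, dt$ by the conserved momentum (1.9.1) and the running critical norm, with the favorable sign coming from defocusing. Radiality is used twice: to upgrade the Morawetz estimate into control near the spatial origin where concentration would otherwise occur, and to sharpen the dispersive decay of the frequency-localized free pieces. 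I would then partition $\mathbb{R}$ into intervals $I_{j}$ on each of which $\| e^{it\Delta} u_{0} \|_{L_{t}^{q} L_{x}^{r}(I_{j})} \leq \eps_{0}$, $\eps_{0}$ the small-data threshold; the linear estimate above bounds the number of such intervals by $\lesssim (\| u_{0} \|_{B_{1,1}^{d/2 + s_{c}}} / \eps_{0})^{q}$. On each $I_{j}$ the solution is a perturbation of its free evolution, so the local theory bounds $\| u \|_{L_{t}^{q} L_{x}^{r}(I_{j})}$ and, crucially, bounds the increment of $\| u(t) \|_{\dot H^{s_{c}}}$ across $I_{j}$ through the nonlinear Duhamel term, which the Morawetz estimate keeps summable.

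The heart of the argument is a bootstrap running these ingredients against each other: the Morawetz bound limits how much critical mass can accumulate in the regions where $u$ departs from the free flow, while the Besov-controlled dispersion guarantees those regions are few and that the interaction terms are governed by fractional-derivative Strichartz norms. These close for $p \leq 3$ because $|u|^{p-1}u$ then demands at most one full derivative, for which the fractional product and chain rule estimates in the Besov scale are available. Summing the per-interval increments gives a uniform bound on $\sup_{t} \| u(t) \|_{\dot H^{s_{c}}}$, ruling out type one blowup, and feeding this back into the interval count produces the global scattering-norm bound. For $1 < p < 3$ the strict inequality supplies a spare power when summing the geometric-type series from the interval decomposition, converting exponential into polynomial dependence on $\| u_{0} \|_{B_{1,1}^{d/2 + s_{c}}}$; at $p = 3$ this spare power vanishes and one recovers only finiteness, consistent with the statement restricting the polynomial bound to $p < 3$. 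For the distinguished case $p = 3$, $d = 3$ one could alternatively invoke \cite{kenig2010scattering} once uniform $\dot H^{1/2}$ control is in hand, but the quantitative and general-$p$ statement requires the direct argument.

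The step I expect to be the \emph{main obstacle} is the a priori control of $\sup_{t} \| u(t) \|_{\dot H^{s_{c}}}$: the Morawetz estimate at the $\dot H^{s_{c}}$ scale carries on its right-hand side the very norm one is trying to bound, so the bootstrap is delicate and must be closed quantitatively rather than by a compactness or contradiction argument, which would destroy the polynomial bound. Arranging that the per-interval $\dot H^{s_{c}}$ increments are summable, and that the accumulated constants grow only polynomially in the number of intervals, is where the $\ell^{1}$ Besov dispersion, the radial Morawetz sign, and the subcubic structure $p < 3$ all have to be balanced at once.
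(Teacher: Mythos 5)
Your proposal has a genuine gap at its central step, and it is precisely the step you flag as the ``main obstacle.'' You propose to control $\sup_{t}\| u(t)\|_{\dot H^{s_{c}}}$ by a Lin--Strauss/virial estimate adapted to the $\dot H^{s_{c}}$ scaling, closed by a quantitative bootstrap over intervals on which the \emph{free} evolution $e^{it\Delta}u_{0}$ is small in $L_{t}^{q}L_{x}^{r}$. Two things break. First, no Morawetz-type monotonicity at the $\dot H^{s_{c}}$ level ($0<s_{c}<1$) is known whose right-hand side is controlled by quantities you actually possess: the Lin--Strauss estimate is priced in $\sup_{t}\|u(t)\|_{\dot H^{1/2}}$, which is exactly the unknown, and the momentum $(\ref{1.9.1})$, while scaling correctly, does not dominate it. Declaring that the bootstrap ``must be closed quantitatively'' does not identify a coercive quantity that closes it; the absence of such a quantity is the reason the $0<s_{c}<1$ problem is open for general $\dot H^{s_{c}}$ data. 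Second, the interval decomposition itself does not do what you need: smallness of $\| e^{it\Delta}u_{0}\|_{L_{t}^{q}L_{x}^{r}(I_{j})}$ does not imply smallness of $\| e^{i(t-t_{j})\Delta}u(t_{j})\|_{L_{t}^{q}L_{x}^{r}(I_{j})}$, since the Duhamel contributions accumulated on earlier intervals are not small in the critical space unless the critical norm is already under control --- the very circularity you are trying to break. So the perturbative step on each $I_{j}$, and hence the summability of the increments, is unsupported.

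The paper proceeds by an entirely different mechanism that sidesteps any uniform control of $\| u(t)\|_{\dot H^{s_{c}}}$. After rescaling so that the solution is small in the critical spacetime norm on $[0,1]$ (Lemmas \ref{l4.1} and \ref{l4.2}), one writes $u=v+w$ with $w$ the free evolution of the (radial, $B_{1,1}^{\frac d2+s_{c}}$) data and $v$ the nonlinear remainder, and tracks the pseudoconformal energy $\mathcal E(t)=\|(x+2it\nabla)v\|_{L^{2}}^{2}+\tfrac{8}{p+1}t^{2}\|v\|_{L^{p+1}}^{p+1}$ of $v$ alone. The $L^{1}$-based Besov regularity and radiality give strong weighted decay for $w$ --- bounds like $(\ref{2.19})$, $(\ref{2.40})$, $(\ref{6.10})$--$(\ref{6.13})$ on $\|xw\|_{L^{\infty}}$, $\|\nabla w\|_{L^{\infty}}$, $\|(x+2it\nabla)w\|$ --- and these make the error terms in $\frac{d}{dt}\mathcal E(t)$ integrable, so a Gronwall/Cauchy--Schwarz argument yields $\|t^{-2}\mathcal E(t)\|_{L_{t}^{1/(1-s_{c})}}\lesssim$ a polynomial in $\| u_{0}\|_{B_{1,1}^{\frac d2+s_{c}}}$ (equation $(\ref{6.26})$); since $\|v(t)\|_{L^{p+1}}^{p+1}\lesssim t^{-2}\mathcal E(t)$ and $w$ obeys Strichartz, the scattering norm bound follows without ever bounding the critical norm of $u$. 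Also note that the $p<3$ versus $p=3$ dichotomy in the theorem is not a ``spare power in a geometric series'': for the cubic case the paper's data decomposition constant $R(\epsilon,u_{0})$ depends on the profile of $u_{0}$, so uniform (non-polynomial) bounds are recovered only through a concentration-compactness argument (Section 3), whereas for $1<p<3$ the direct pseudoconformal argument closes with explicit polynomial dependence. Your opening linear estimate $\| e^{it\Delta}u_{0}\|_{L_{t}^{q}L_{x}^{r}}\lesssim\| u_{0}\|_{B_{1,1}^{\frac d2+s_{c}}}$ is correct and in the spirit of how the paper uses the Besov norm, but by itself it cannot substitute for the missing monotone quantity.
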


The Besov space $B_{p, q}^{s}(\mathbb{R}^{d})$ is given by the norm
\begin{equation}\label{1.11}
\| u_{0} \|_{B_{p, q}^{s}(\mathbb{R}^{d})} = (\sum_{j} 2^{jps} \| P_{j} u_{0} \|_{L^{q}}^{p})^{1/p},
\end{equation}
when $1 \leq p < \infty$, with the usual modification when $p = \infty$. Here, $P_{j}$ is the usual Littlewood--Paley projection operator. The Sobolev embedding theorem implies that $B_{1,1}^{\frac{d}{2} + s_{c}}(\mathbb{R}^{d}) \subset \dot{H}^{s_{c}}(\mathbb{R}^{d})$. The $B_{1,1}^{\frac{d}{2} + s_{c}}(\mathbb{R}^{d})$ norm is invariant under the scaling symmetry $(\ref{1.9})$.

For the Schr{\"o}dinger equation in dimensions $d \geq 3$,
\begin{equation}\label{1.12}
i u_{t} + \Delta u = F, \qquad u(0,x) = u_{0}, \qquad u : I \times \mathbb{R}^{d} \rightarrow \mathbb{C},
\end{equation}
we have the Strichartz estimate
\begin{equation}\label{1.13}
\| u \|_{L_{t}^{2} L_{x}^{\frac{2d}{d - 2}} \cap L_{t}^{\infty} L_{x}^{2}(I \times \mathbb{R}^{d})} \lesssim \| u_{0} \|_{L^{2}} + \| F \|_{L_{t}^{1} L_{x}^{2} + L_{t}^{2} L_{x}^{\frac{2d}{d + 2}}(I \times \mathbb{R}^{d})}.
\end{equation}
See \cite{tao2006nonlinear} and the references therein for a detailed treatment of this topic.

In particular, when $F = 0$, $(\ref{1.13})$ implies a bound on $\| u \|_{L_{t}^{p} L_{x}^{q}}$, when $(p, q)$ is an admissible pair, i.e.
\begin{equation}\label{1.14}
\frac{2}{p} = d(\frac{1}{2} - \frac{1}{q}), \qquad p \geq 2.
\end{equation}
Then by the Sobolev embedding theorem, if $F = 0$,
\begin{equation}\label{1.15}
\| u \|_{L_{t}^{p} L_{x}^{r}} \lesssim \| u_{0} \|_{\dot{H}^{s}}, \qquad \text{for} \qquad \frac{1}{r} = \frac{1}{q} - \frac{s}{d}, \qquad (p, q) \qquad \text{is admissible}.
\end{equation}
The pair $(p, r)$ is then said to be $s$-admissible. Doing some algebra, $(\frac{p + 1}{1 - s_{c}}, p + 1)$ is $s_{c}$-admissible. Since $\frac{p + 1}{1 - s_{c}} < \infty$, a bound on $(\ref{1.10})$ on $\mathbb{R} \times \mathbb{R}^{d}$ implies scattering for $(\ref{1.1})$. Again, see \cite{tao2006nonlinear} for a detailed exposition of the method to prove this fact. It is useful to use the Strichartz space,
\begin{equation}\label{1.15.1}
\| u \|_{\dot{S}^{s_{c}}(I \times \mathbb{R}^{d})} = \| |\nabla|^{s_{c}} u \|_{L_{t}^{2} L_{x}^{\frac{2d}{d - 2}} \cap L_{t}^{\infty} L_{x}^{2}(I \times \mathbb{R}^{d})}.
\end{equation}

Studying nonlinear wave (\cite{dodson2018global}, \cite{dodson2021global2}) and Schr{\"o}dinger equations (\cite{dodson2021global}) with initial data in a critical Besov space has proved to be a very fruitful endeavor. Consider $(\ref{1.6})$, for example. Observe that the interaction Morawetz estimates in \cite{colliander2004global} and conservation of energy imply
\begin{equation}\label{1.16}
\| u \|_{L_{t}^{8} L_{x}^{4}(\mathbb{R} \times \mathbb{R}^{3})}^{8} \lesssim (1 + \| u_{0} \|_{\dot{H}^{1/2}}^{3}) \| u_{0} \|_{\dot{H}^{1}}^{3} \| u_{0} \|_{L^{2}}^{3}.
\end{equation}
This provides good bounds on the left hand side of $(\ref{1.16})$ for $u_{0} = e^{-\frac{|x|^{2}}{2}}$, or for any rescaled version under $(\ref{1.9})$, i.e., $u_{0} = \lambda e^{-\frac{\lambda^{2} |x|^{2}}{2}}$. Plugging
\begin{equation}\label{1.17}
u_{0} = c_{1} e^{-\frac{|x|^{2}}{2}} + c_{2} \lambda e^{-\lambda^{2} \frac{|x|^{2}}{2}},
\end{equation}
into $(\ref{1.16})$ gives a bound on the right hand side that is a function of $\lambda$, $c_{1}$, and $c_{2}$. However, Theorem $\ref{t1.1}$ implies that the $L_{t}^{8} L_{x}^{4}$ bound depends only on $c_{1}$ and $c_{2}$. Moreover, for $(\ref{1.1})$ with $1 < p < 3$, such bounds are polynomially dependent on $c_{1}$ and $c_{2}$. We could also replace $(\ref{1.17})$ by a convergent series of $c_{j}$'s, with arbitrary $\lambda_{j}$'s.

\section{Scattering for the cubic NLS in three dimensions}
We begin by proving scattering for the cubic equation $(\ref{1.9})$ with $u_{0} \in B_{1,1}^{2}(\mathbb{R}^{3})$, before moving on to the general problem. In this section, we do not prove any uniform bounds on the scattering size as a function of the $B_{1,1}^{2}$ norm of the initial data.
\begin{theorem}\label{t2.1}
The initial value problem
\begin{equation}\label{2.1}
i u_{t} + \Delta u = |u|^{2} u, \qquad u(0,x) = u_{0},
\end{equation}
with radially symmetric initial data $u_{0} \in B_{1,1}^{2}(\mathbb{R}^{3})$ has a global solution that scatters.
\end{theorem}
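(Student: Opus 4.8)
The plan is to prove scattering for the radial cubic NLS in $\mathbb{R}^3$ with $u_0 \in B_{1,1}^2$ by reducing it to the concentration-compactness result of \cite{kenig2010scattering}, which establishes scattering whenever the $\dot H^{1/2}$ norm stays uniformly bounded on the maximal interval of existence. Since $B_{1,1}^2(\mathbb{R}^3) \subset \dot H^{1/2}(\mathbb{R}^3)$ continuously, what we must rule out is precisely type one blowup: a scenario in which $\sup_{t} \|u(t)\|_{\dot H^{1/2}} = +\infty$. Thus the entire content of the theorem is an a priori bound $\sup_{t \in I_{\max}} \|u(t)\|_{\dot H^{1/2}} \lesssim C(\|u_0\|_{B_{1,1}^2})$, after which \cite{kenig2010scattering} furnishes global well-posedness and scattering for free.

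\medskip

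To get such a bound I would exploit the extra structure of the Besov data, which is far stronger than $\dot H^{1/2}$: the sum $\sum_j 2^{2j}\|P_j u_0\|_{L^1}$ is finite, so each frequency-localized piece of the initial data is controlled in $L^1$, hence (by Bernstein) in every $L^q$. The strategy is a Littlewood--Paley decomposition of the solution combined with the conservation laws available in the cubic problem. Concretely, I would split the frequency range at a well-chosen threshold $N = N(\|u_0\|_{B_{1,1}^2})$ into low frequencies $\{|\xi| \le N\}$ and high frequencies $\{|\xi| > N\}$. For the low-frequency part, Bernstein turns control of $\|P_{\le N} u_0\|_{L^2}$ into control of $\|P_{\le N} u_0\|_{\dot H^1}$ with a loss of $N$, and the energy $E(P_{\le N} u_0)$ is finite and bounded in terms of the Besov norm; conservation of energy and mass along the flow of the genuinely $H^1$-subcritical cubic equation then keeps these quantities bounded. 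The high-frequency part is small in $\dot H^{1/2}$ — indeed $\|P_{> N} u_0\|_{\dot H^{1/2}}^2 \lesssim \sum_{j > \log_2 N} 2^{j}\|P_j u_0\|_{L^2}^2$, which the rapid Besov decay makes as small as desired by taking $N$ large. One then runs a perturbation/bootstrap argument: treat the full solution as a perturbation of the low-frequency $H^1$ solution, using the small high-frequency tail and the Strichartz estimate $(\ref{1.13})$ in the space $\dot S^{s_c}$ of $(\ref{1.15.1})$ to close a continuity argument that propagates the $\dot H^{1/2}$ bound to all times.

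\medskip

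The radial hypothesis enters to supply the decisive global spacetime control: the radial interaction Morawetz (or Lin--Strauss Morawetz) estimate, as in the bound $(\ref{1.16})$ derived from \cite{colliander2004global}, gives an a priori $L_t^8 L_x^4(\mathbb{R}\times\mathbb{R}^3)$ bound whose right-hand side, when applied to the low-frequency component, is finite and polynomially controlled by the Besov norm through the energy and mass of $P_{\le N} u_0$. Radial symmetry removes the Galilean degree of freedom and localizes the Morawetz weight at the origin, which is what lets one convert the Morawetz output into a genuine global bound on the scattering norm rather than merely a local-in-time statement. Feeding this global $L_t^8 L_x^4$ control back into the Strichartz inequalities closes the bootstrap and bounds the full $\dot S^{1/2}$ norm globally.

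\medskip

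The main obstacle I anticipate is the frequency interaction in the bootstrap: the cubic nonlinearity couples the $H^1$-controlled low frequencies to the small-in-$\dot H^{1/2}$ high frequencies, and one must verify that these cross terms — in particular products of two low-frequency and one high-frequency factor, or one low and two high — are estimable in the appropriate dual Strichartz norm on the right-hand side of $(\ref{1.13})$, with the high-frequency smallness genuinely compensating for the derivative loss incurred when moving mass between the $L^2$-based and $\dot H^1$-based estimates. Making the choice of $N$ uniform in time while closing a continuity argument on the maximal interval, so that no constant silently depends on the length of $I_{\max}$, is the delicate accounting step. Once the $\dot H^{1/2}$ bound is in hand uniformly over $I_{\max}$, the conclusion follows immediately by invoking \cite{kenig2010scattering}.
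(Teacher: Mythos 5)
Your reduction has a genuine gap at its core, and it appears at two levels. First, the low-frequency truncation $P_{\le N}u_{0}$ is not an $H^{1}$ function in general. The space $B_{1,1}^{2}$ in this paper is the homogeneous, scale-invariant Besov space (its invariance under $(\ref{1.9})$ is used explicitly), and it does not embed into $L^{2}$: Bernstein only gives $\| P_{j}u_{0}\|_{L^{2}} \lesssim 2^{-j/2}\, 2^{2j}\| P_{j}u_{0}\|_{L^{1}}$, so the $L^{2}$ norm of the low frequencies can diverge even though $\sum_{j}2^{2j}\|P_{j}u_{0}\|_{L^{1}}$ is finite. Consequently ``conservation of mass'' is not available for your reference solution, the classical $H^{1}$ scattering theory of \cite{MR876933}, \cite{MR839728} does not apply to it as stated, and the interaction Morawetz bound $(\ref{1.16})$ you invoke carries a factor $\| u_{0}\|_{L^{2}}^{3}$ that may be infinite. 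This can be patched by discarding very low frequencies as well (both tails are small in $\dot H^{1/2}$), but that leads directly to the second and more serious problem.

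Second, the perturbation/continuity step is circular. Long-time stability for the $\dot H^{1/2}$-critical cubic equation requires the data error to be small depending on the global scattering size ($L_{t,x}^{5}$ or $L_{t}^{8}L_{x}^{4}$) of the reference solution. For a frequency-window truncation the only bound you have on that size is of Morawetz--energy type, and its right-hand side grows without control as the window widens ($\| P_{\le N}u_{0}\|_{\dot H^{1}} \lesssim N^{1/2}\| u_{0}\|_{B_{1,1}^{2}}$, and the mass grows with the low cutoff); this is exactly the degeneration the paper illustrates with $(\ref{1.17})$. So the smallness you need of the tails shrinks as the window widens, while the smallness you actually have is only whatever the fixed $u_{0}$ supplies, and nothing in your bootstrap breaks this loop. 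More bluntly, asserting that the continuity argument ``propagates the $\dot H^{1/2}$ bound to all times'' assumes the uniform critical-norm bound that is precisely the open difficulty; reducing to \cite{kenig2010scattering} is legitimate in principle but is not progress by itself. The paper's proof is structured to avoid this entirely: global existence is imported from \cite{dodson2021global}, the solution is split as $u=v+w$ with $w$ the free evolution of (part of) the data, and one controls the pseudoconformal energy $\mathcal E(t)$ of the nonlinear component $v$ on $[1,\infty)$, using the $L^{1}$-based dispersive decay of $w$ (e.g. $\| w\|_{L^{\infty}}\lesssim t^{-1/2}$ and the smallness of $\int \|\nabla w\|_{L^{\infty}}\,dt$) together with the radial Sobolev embedding, closing a Gronwall-type estimate for $\int_{1}^{\infty} t^{-4}\mathcal E(t)^{2}\,dt$; this yields the $L_{t}^{8}L_{x}^{4}$ bound and hence scattering, with no a priori $\dot H^{1/2}$ bound ever proved directly. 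To make your route viable you would need a genuinely new mechanism to bound the critical norm uniformly, which the proposal does not supply.
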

\begin{proof}
By time reversal symmetry, it suffices to prove scattering on $[0, \infty)$. In \cite{dodson2021global}, we proved that the cubic nonlinear Schr{\"o}dinger equation is globally well-posed for initial data $u_{0} \in W^{\frac{7}{6}, \frac{11}{7}}(\mathbb{R}^{3})$. By the Sobolev embedding theorem, $B_{1,1}^{2}(\mathbb{R}^{3}) \subset W^{\frac{7}{6}, \frac{11}{7}}(\mathbb{R}^{3})$, so global well-posedness follows.

Furthermore, after rescaling the initial data, suppose that the global solution has the form
\begin{equation}\label{2.2}
\| u \|_{L_{t,x}^{5}([0, 1] \times \mathbb{R}^{3})} \leq \delta, \qquad \text{which implies} \qquad \| u \|_{\dot{S}^{1/2}([0, 1] \times \mathbb{R}^{3})} < \infty.
\end{equation}
Then for $1 \leq t < \infty$, decompose
\begin{equation}\label{2.3}
u(t) = w(t) + v(t), \qquad \text{where} \qquad w(t) = e^{it \Delta} u_{0}^{(1)},
\end{equation}
and $u_{0} = u_{0}^{(1)} + u_{0}^{(2)}$ is some decomposition of $u_{0}$ that will be specified later.

Let $\mathcal E(t)$ denote the conformal energy of $v$,
\begin{equation}\label{2.4}
\mathcal E(t) = \| (x + 2it \nabla) v \|_{L^{2}}^{2} + 2t^{2} \| v \|_{L^{4}}^{4} = \| xv \|_{L^{2}}^{2} + 2 \langle xv, 2it \nabla v \rangle_{L^{2}} + 8 t^{2}E(t),
\end{equation}
where $E(t)$ is the energy in $(\ref{1.8})$,
\begin{equation}\label{2.5}
E(t) = \frac{1}{2} \| \nabla v \|_{L^{2}}^{2} + \frac{1}{4} \| v \|_{L^{4}}^{4}.
\end{equation}
When $w = 0$,
\begin{equation}\label{2.6}
\frac{d}{dt} \mathcal E(t) = -2t \| v \|_{L^{4}}^{4},
\end{equation}
which implies $\| v \|_{L_{t,x}^{4}([1, \infty) \times \mathbb{R}^{3})} < \infty$. Interpolating this bound with the bound $\| v \|_{L_{t}^{\infty} L_{x}^{4}} < \infty$, which a uniform bound on $\mathcal E(t)$ implies for any $1 \leq t \leq \infty$, gives
\begin{equation}\label{2.7}
\| u \|_{L_{t}^{8} L_{x}^{4}([1, \infty) \times \mathbb{R}^{3})} = \| v \|_{L_{t}^{8} L_{x}^{4}([1, \infty) \times \mathbb{R}^{3})} < \infty.
\end{equation}

For a general $u_{0}^{(1)} \in \dot{H}^{1/2}(\mathbb{R}^{3})$, Strichartz estimates imply that
\begin{equation}\label{2.8}
\| w \|_{L_{t}^{8} L_{x}^{4}(\mathbb{R} \times \mathbb{R}^{3})} \lesssim \| u_{0} \|_{\dot{H}^{1/2}(\mathbb{R}^{3})},
\end{equation}
so to prove scattering, it suffices to prove
\begin{equation}\label{2.9}
\int_{1}^{\infty} \frac{1}{t^{4}} \mathcal E(t)^{2} dt < \infty.
\end{equation}
Indeed,
\begin{equation}\label{2.9.1}
\int_{1}^{\infty} \| v \|_{L^{4}}^{8} dt \lesssim \int_{1}^{\infty} \frac{1}{t^{4}} \mathcal E(t)^{2} dt.
\end{equation}

By Duhamel's principle,
\begin{equation}\label{2.10}
v(1) = -i \int_{0}^{1} e^{i(1 - \tau) \Delta} |u|^{2} u d\tau + e^{i 1 \Delta} u_{0}^{(2)}.
\end{equation}
By direct computation,
\begin{equation}\label{2.11}
(x + 2i \nabla) \int_{0}^{1} e^{(1 - \tau) \Delta} |u|^{2} u d\tau = \int_{0}^{1} (x + 2i(1 - \tau) \nabla) e^{i(1 - \tau) \Delta} |u|^{2} u d\tau + \int_{0}^{1} 2i \tau \nabla e^{i(1 - \tau) \Delta} |u|^{2} u d\tau.
\end{equation}

By examining the kernel, 
\begin{equation}\label{2.12}
e^{it \Delta} f = \frac{C}{t^{3/2}} \int e^{-i \frac{|x - y|^{2}}{4t}} f(y) dy, \qquad (x + 2it \nabla) e^{it \Delta} f = e^{it \Delta} xf,
\end{equation}
so using the radial Sobolev embedding theorem and the computations in \cite{dodson2021global},
\begin{equation}\label{2.13}
\| \int_{0}^{1} (x + 2i(1 - \tau) \nabla) e^{i(1 - \tau) \Delta} |u|^{2} u d\tau \|_{L^{2}} \lesssim \| x |u|^{2} u \|_{L_{t}^{1} L_{x}^{2}} \lesssim \| xu \|_{L_{t,x}^{\infty}} \| u \|_{L_{t}^{8} L_{x}^{4}}^{2} \lesssim \| u \|_{L_{t}^{\infty} B_{1, 2}^{1/2}} \| u \|_{\dot{S}^{1/2}}^{2} \lesssim \| u_{0} \|_{B_{1,1}^{2}}^{3}.
\end{equation}
Next, recall from \cite{dodson2021global} that for any $0 < t < 1$,
\begin{equation}\label{2.14}
u = u_{1} + u_{2}, \qquad \text{where} \qquad \| \nabla u_{1} \|_{L^{2}} \lesssim t^{-1/4}, \qquad \| \nabla u_{2} \|_{L^{6}} \lesssim t^{-3/4},
\end{equation}
with constant independent of $t$. Therefore, by Strichartz estimates,
\begin{equation}\label{2.15}
\| \int_{0}^{1} 2i \tau \nabla e^{i(1 - \tau) \Delta} |u|^{2} u d\tau \|_{L^{2}} \lesssim \| \tau \nabla u_{1} \|_{L_{\tau}^{\infty} L_{x}^{2}} \| u \|_{L_{t}^{4} L_{x}^{6}}^{2} + \| \tau \nabla u_{2} \|_{L_{\tau}^{\infty} L_{x}^{6}} \| u \|_{L_{t}^{4} L_{x}^{6}}^{2} \lesssim_{\| u_{0} \|_{B_{1,1}^{2}}} 1.
\end{equation}

Now decompose the initial data. Let $\chi \in C_{0}^{\infty}(\mathbb{R}^{3})$, $\chi(x) = 1$ on $|x| \leq 1$, $\chi(x)$ is supported on $|x| \leq 2$, and let $R(\epsilon, u_{0}) < \infty$ be a constant sufficiently large so that
\begin{equation}\label{2.16}
\sum_{j} 2^{j/2} \| (1 - \chi(\frac{x}{R})) P_{j} u_{0} \|_{L^{2}} \leq \epsilon, \qquad \text{and} \qquad \sum_{j} 2^{2j} \| (1 - \chi(\frac{x}{R})) P_{j} u_{0} \|_{L^{1}} \leq \epsilon.
\end{equation}
By H{\"o}lder's inequality and the Sobolev embedding theorem,
\begin{equation}\label{2.17}
\| \nabla ((1 - \chi(\frac{x}{R})) P_{j} u_{0}) \|_{L^{2}} \lesssim \| P_{j} u_{0} \|_{\dot{H}^{1}}, \qquad \text{and} \qquad \| \nabla^{2} ((1 - \chi(\frac{x}{R})) P_{j} u_{0}) \|_{L^{1}} \lesssim 2^{2j} \| P_{j} u_{0} \|_{L^{1}}.
\end{equation}
Then,
\begin{equation}\label{2.18}
\sum_{j} \| (1 - \chi(\frac{x}{R})) P_{j} u_{0} \|_{L^{2}}^{1/2} \| (1 - \chi(\frac{x}{R})) P_{j} u_{0} \|_{\dot{H}^{1}}^{1/2} \lesssim \epsilon^{1/2} \| u_{0} \|_{B_{1,1}^{2}}^{1/2}.
\end{equation}
Therefore, by the radial Sobolev embedding theorem, if $\epsilon \leq \| u_{0} \|_{B_{1,1}^{2}}^{-2}$,
\begin{equation}\label{2.19}
\| |x| e^{it \Delta} (1 - \chi(\frac{x}{R})) u_{0} \|_{L^{\infty}} \lesssim \epsilon^{1/4}.
\end{equation}
Also, by H{\"o}lder's inequality,
\begin{equation}\label{2.20}
\| x \chi(\frac{x}{R}) u_{0} \|_{L^{2}} \lesssim R^{3/2} \| u_{0} \|_{L^{3}},
\end{equation}
so $(\ref{2.12})$ implies
\begin{equation}\label{2.21}
\| (x + 2i \nabla) v(1) \|_{L^{2}} \lesssim R^{3/2} \| u_{0} \|_{B_{1,1}^{2}}.
\end{equation}
The computations in \cite{dodson2021global} also imply
\begin{equation}\label{2.22}
\| v(1) \|_{L^{4}}^{4} \lesssim 1,
\end{equation}
and therefore,
\begin{equation}\label{2.23}
\mathcal E(1) \lesssim_{\| u_{0} \|_{B_{1,1}^{2}}, R} 1.
\end{equation}

To obtain the bound $(\ref{2.9})$, observe that $v$ solves
\begin{equation}\label{2.24}
i v_{t} + \Delta v = |u|^{2} u, \qquad v(1,x) = (\ref{2.10}),
\end{equation}
and $w$ solves
\begin{equation}\label{2.25}
i w_{t} + \Delta w = 0, \qquad w(1,x) = e^{i 1 \Delta} u_{0}^{(1)},
\end{equation}
on $[1, \infty)$.

Rearranging $(\ref{2.24})$,
\begin{equation}\label{2.26}
-\Delta v + |v|^{2} v = i v_{t} - F, \qquad F = 2 |v|^{2} w + v^{2} \bar{w} + 2 |w|^{2} v + w^{2} \bar{v} + |w|^{2} w = F_{1} + F_{2} + F_{3}.
\end{equation}
Integrating by parts,
\begin{equation}\label{2.27}
\aligned
\frac{d}{dt} \mathcal E(t) = 16t E(v) + 8t^{2} \langle v_{t}, -\Delta v + |v|^{2} v \rangle + 4 \langle xv, i \nabla v \rangle + 4t \langle x v_{t}, i \nabla v \rangle + 4t \langle xv, i \nabla v_{t} \rangle \\
+ 2 \langle ix \Delta v, xv \rangle - 2 \langle ixF, xv \rangle = -2t \| v \|_{L^{4}}^{4} + 8t^{2} \langle v_{t}, F \rangle - 4t \langle xF, \nabla v \rangle + 4t \langle xv, \nabla F \rangle - 2 \langle ixF, xv \rangle.
\endaligned
\end{equation}

Integrating by parts and plugging in $(\ref{2.26})$, with $F_{3} = |w|^{2} w$,
\begin{equation}\label{2.28}
\aligned
8t^{2} \langle v_{t}, F_{3} \rangle - 4t \langle xF_{3}, \nabla v \rangle + 4t \langle xv, \nabla F_{3} \rangle - 2 \langle ixF_{3}, xv \rangle \\
= 2\langle (x + 2it \nabla) |w|^{2} w, i(x + 2it \nabla) v \rangle_{L^{2}} + O(t^{2} \langle |v|^{3} + |w|^{3}, |w|^{3} \rangle) \\
\lesssim \| (x + 2it \nabla) v \|_{L^{2}} \| xw \|_{L^{\infty}} \| w \|_{L^{4}}^{2} + \| (x + 2it \nabla) v \|_{L^{2}} \| t \nabla w \|_{L^{\infty}} \| w \|_{L^{4}}^{2} + t^{2} \| v \|_{L^{4}}^{3} \| w \|_{L^{4}} \| w \|_{L^{\infty}}^{2} + t^{2} \| w \|_{L^{6}}^{6} \\
\lesssim \mathcal E(t)^{1/2} \| w \|_{L^{4}}^{2} + t^{3/8} \mathcal E(t)^{3/4} \| w \|_{L^{\infty}}^{2} + t^{3/2} \| w \|_{L^{\infty}}^{2}.
\endaligned
\end{equation}
Also, integrating by parts and plugging in $(\ref{2.26})$ with $F_{2} = 2 |w|^{2} v + w^{2} \bar{v}$,
\begin{equation}\label{2.29}
\aligned
8t^{2} \langle v_{t}, F_{2} \rangle - 4t \langle xF_{2}, \nabla v \rangle + 4t \langle xv, \nabla F_{2} \rangle - 2 \langle ixF_{2}, xv \rangle \\
= 2\langle (x + 2it \nabla) F_{2}, i(x + 2it \nabla) v \rangle_{L^{2}} + O(t^{2} \langle |v|^{3} + |w|^{3}, |w|^{2} |v| \rangle) \\
\lesssim \| (x + 2it \nabla) v \|_{L^{2}} \| xw \|_{L^{\infty}} \| w \|_{L^{4}} \| v \|_{L^{4}} + \| (x + 2it \nabla) v \|_{L^{2}} \| t \nabla w \|_{L^{\infty}} \| w \|_{L^{4}} \| v \|_{L^{4}} \\ + \| (x + 2it \nabla) v \|_{L^{2}}^{2} \| w \|_{L^{\infty}}^{2} + t^{2} \| v \|_{L^{4}}^{4} \| w \|_{L^{\infty}}^{2} + t^{2} \| w \|_{L^{6}}^{6} \\
\lesssim t^{-1/2} \mathcal E(t)^{3/4} \| w \|_{L^{4}} + t^{3/2} \| w \|_{L^{\infty}}^{2} + \mathcal E(t) \| w \|_{L^{\infty}}^{2}.
\endaligned
\end{equation}

Finally, take
\begin{equation}\label{2.30}
8t^{2} \langle v_{t}, F_{1} \rangle - 4t \langle xF_{1}, \nabla v \rangle + 4t \langle xv, \nabla F_{1} \rangle - 2 \langle ixF_{1}, xv \rangle,
\end{equation}
with $F_{1} = 2 |v|^{2} w + v^{2} \bar{w}$. This term will be handled slightly differently from $(\ref{2.28})$ and $(\ref{2.29})$. By $(\ref{2.19})$,
\begin{equation}\label{2.31}
- 4t \langle xF_{1}, \nabla v \rangle - 2 \langle ixF_{1}, xv \rangle = -2 \langle ix F_{1}, (x + 2it \nabla) v \rangle \lesssim \| (x + 2it \nabla) v \|_{L^{2}} \| v \|_{L^{4}}^{2} \| x w \|_{L^{\infty}} \lesssim \frac{\epsilon^{1/4}}{t} \mathcal E(t).
\end{equation}
Next, integrating by parts,
\begin{equation}\label{2.32}
\aligned
4t \langle xv, \nabla F_{1} \rangle = -4t \langle x \nabla v, F_{1} \rangle - 12t \langle v, F_{1} \rangle = -4t \langle xw, \nabla(|v|^{2} v) \rangle -12t \langle v, F_{1} \rangle \\ \lesssim t \| w \|_{L^{4}} \| v \|_{L^{4}}^{3} + 4t \langle \nabla w, x |v|^{2} v \rangle \lesssim t^{-1/2} \| w \|_{L^{4}} \mathcal E(t)^{3/4} + 4t \langle \nabla w, x |v|^{2} v \rangle.
\endaligned
\end{equation}
Then by the product rule, integrating by parts, and $(\ref{2.17})$,
\begin{equation}\label{2.33}
\aligned
4t \langle \nabla w, x |v|^{2} v \rangle = 8t \langle \nabla w, |v|^{2} (x + 2it \nabla) v \rangle - 4t^{2} \langle \nabla w, v^{2} (x - 2it \nabla) \bar{v} \rangle - 8t \langle \nabla w, i \nabla(|v|^{2} v) \rangle \\
\lesssim t \| \nabla w \|_{L^{\infty}} \| (x + 2it \nabla) v \|_{L^{2}} \| v \|_{L^{4}}^{2} - 8t^{2} \langle i \Delta w, |v|^{2} v \rangle \\ \lesssim \| \nabla w \|_{L^{\infty}} \mathcal E(t) + t \| \Delta w \|_{L^{\infty}} \| v \|_{L^{2}} \mathcal E(t)^{1/2} \lesssim \| \nabla w \|_{L^{\infty}} \mathcal E(t) + t^{-1/2} \| v \|_{L^{2}} \mathcal E(t)^{1/2}.
\endaligned
\end{equation}
Meanwhile, integrating by parts in $t$,
\begin{equation}\label{2.34}
\int_{1}^{T} 8t^{2} \langle v_{t}, F_{1} \rangle dt = 8t^{2} \langle |v|^{3}, |w| \rangle|_{1}^{T} - \int_{1}^{T} 8t^{2} \langle |v|^{2} v, w_{t} \rangle - \int_{1}^{T} 16t \langle |v|^{2} v, w \rangle dt.
\end{equation}
First observe that
\begin{equation}\label{2.35}
8t^{2} \langle |v|^{3}, |w| \rangle|_{1}^{T} \lesssim t^{1/2} \| w \|_{L^{4}} \mathcal E(t)^{3/4}|_{1}^{T}.
\end{equation}
Also compute
\begin{equation}\label{2.36}
8t^{2}  \langle |v|^{2} v, w_{t} \rangle \lesssim t \| \Delta w \|_{L^{\infty}} \| v \|_{L^{2}} \mathcal E(t)^{1/2} \lesssim t^{-1/2} \| v \|_{L^{2}} \mathcal E(t)^{1/2}, \qquad \text{and} \qquad 16t \langle |v|^{2} v, w \rangle \lesssim t^{-1/2} \mathcal E(t)^{3/4} \| w \|_{L^{4}}.
\end{equation}

Therefore,
\begin{equation}\label{2.37}
\aligned
\mathcal E(t) \lesssim \int_{1}^{t} [\mathcal E(s)^{1/2} \| w \|_{L^{4}}^{2} + s^{3/8} \mathcal E(s)^{3/4} \| w \|_{L^{\infty}}^{2} + s^{3/2} \| w \|_{L^{\infty}}^{2} + s^{1/4} \mathcal E(s)^{1/2} \| w \|_{L^{\infty}} \\ + \mathcal E(s) \| w \|_{L^{\infty}}^{2} + \frac{\epsilon}{s} \mathcal E(s) + \| \nabla w \|_{L^{\infty}} \mathcal E(s) + s^{-1/2} \mathcal E(s)^{1/2} \| v \|_{L^{2}} + s^{-1/2} \mathcal E(s)^{3/4} \| w \|_{L^{4}}] ds + t^{1/2} \| w \|_{L^{4}} \mathcal E(t)^{3/4} + R.
\endaligned
\end{equation}
By Fubini's theorem and H{\"o}lder's inequality,
\begin{equation}\label{2.38}
\aligned
 \int_{1}^{\infty} \frac{1}{t^{4}} (\int_{1}^{t} \mathcal E(s)^{1/2} \| w \|_{L^{4}}^{2} ds)^{2} dt \lesssim \int_{1}^{\infty} \frac{1}{t^{3}} (\int_{1}^{t} \mathcal E(s) \| w \|_{L^{4}}^{4} ds) dt = \int_{1}^{\infty} \mathcal E(s) \| w(s) \|_{L^{4}}^{4} \int_{s}^{\infty} \frac{1}{t^{3}} dt ds \\
 \lesssim \int_{1}^{\infty} \frac{1}{s^{2}} \mathcal E(s) \| w \|_{L^{4}}^{4} ds \lesssim (\int_{1}^{\infty} \frac{1}{s^{4}} \mathcal E(s)^{2} ds)^{1/2} (\int_{1}^{\infty} \| w \|_{L^{4}}^{8} ds)^{1/2}.
 \endaligned
\end{equation}
Next, interpolating $(\ref{2.16})$ and $(\ref{2.17})$,
\begin{equation}\label{2.39}
\aligned
\| \nabla e^{it \Delta} (1 - \chi(\frac{x}{R})) P_{j} u_{0} \|_{L^{\infty}} \\ \lesssim \inf \{ t^{-3/2} 2^{-j} \cdot 2^{j} \| (1 - \chi(\frac{x}{R})) (P_{j} u_{0}) \|_{L^{1}}^{1/2} \| \nabla^{2} (1 - \chi(\frac{x}{R})) (P_{j} u_{0}) \|_{L^{1}}^{1/2}, 2^{j} \| \nabla^{2} (1 - \chi(\frac{x}{R})) (P_{j} u_{0}) \|_{L^{1}} \},
\endaligned
\end{equation}
which by $(\ref{2.18})$ implies that for $\epsilon \leq \| u_{0} \|_{B_{1,1}^{2}}^{-8}$,
\begin{equation}\label{2.40}
\int_{0}^{\infty} \| \nabla w \|_{L^{\infty}} dt \lesssim \epsilon^{1/4} \| u_{0} \|_{B_{1,1}^{2}}^{3/4} \lesssim \epsilon^{5/32}.
\end{equation}
Similar computations also show that
\begin{equation}\label{2.41}
\| w \|_{L_{t}^{2} L_{x}^{\infty}} \lesssim \epsilon^{3/8}, \qquad \text{and} \qquad \| w \|_{L^{\infty}} \lesssim \frac{\epsilon^{3/8}}{s^{1/2}}.
\end{equation}

Therefore,
\begin{equation}\label{2.42}
\aligned
\int_{1}^{\infty} \frac{1}{t^{4}} \mathcal E(t)^{2} dt \lesssim \int_{1}^{\infty} \frac{R^{2}}{t^{4}} dt + \int_{1}^{\infty} \frac{1}{t^{3}} \mathcal E(t)^{3/2} \| w \|_{L^{4}}^{2} dt + \int_{1}^{\infty} \frac{1}{t^{4}} (\int_{1}^{t} \mathcal E(s)^{1/2} \| w \|_{L^{4}}^{2} ds)^{2} dt \\
+ \int_{1}^{\infty} \frac{1}{t^{4}} (\int_{1}^{t} s^{3/8} \mathcal E(s)^{3/4} \| w \|_{L^{\infty}}^{2} + s^{3/2} \| w \|_{L^{\infty}}^{2} + s^{1/4} \mathcal E(s)^{1/2} \| w \|_{L^{\infty}} ds)^{2} dt \\
+ \int_{1}^{\infty} \frac{1}{t^{4}}(\int_{1}^{t} \mathcal E(s) \| w \|_{L^{\infty}}^{2} + \frac{\epsilon}{s} \mathcal E(s) + s^{-1/2} \mathcal E(s)^{1/2} \| v \|_{L^{2}} + s^{-1/2} \mathcal E(s)^{3/4} \| w \|_{L^{4}} ds)^{2} dt \\
\lesssim R^{2} + (\int_{1}^{\infty} \frac{1}{t^{4}} \mathcal E(t)^{2} dt)^{3/4}(\int_{1}^{\infty} \| w \|_{L^{4}}^{8} dt)^{1/4} + (\int_{1}^{\infty} \frac{1}{s^{4}} \mathcal E(s)^{2} ds)^{1/2}(\int_{1}^{\infty} \| w \|_{L^{4}}^{8} ds)^{1/2} \\
+ (\int_{1}^{\infty} \frac{1}{s^{4}} \mathcal E(s)^{2} ds)^{3/4} (\int_{1}^{\infty} s^{7} \| w \|_{L^{\infty}}^{16} ds)^{1/4} + (\int_{1}^{\infty} s \| w \|_{L^{\infty}}^{4} ds) + (\int_{1}^{\infty} \frac{1}{s^{4}} \mathcal E(s)^{2} ds)^{1/2} (\int_{1}^{\infty} \| w \|_{L^{\infty}}^{4} ds)^{1/2} \\
+ \epsilon^{5/16} (\int_{1}^{\infty} \frac{1}{s^{4}} \mathcal E(s)^{2} ds) + (\int_{1}^{\infty} \frac{1}{s^{4}} \mathcal E(s)^{2} ds)^{1/2} (\int_{1}^{\infty} \frac{1}{s^{2}} \| v(s) \|_{L^{2}}^{4} ds)^{1/2}.
\endaligned
\end{equation}
Therefore,
\begin{equation}\label{2.43}
\int_{1}^{\infty} \frac{1}{t^{4}} \mathcal E(t)^{2} dt \lesssim R^{2} + \int_{1}^{\infty} \| w \|_{L^{4}}^{8} dt + \int_{1}^{\infty} \| w \|_{L^{\infty}}^{2} dt + (\int_{1}^{\infty} \frac{1}{t^{4}} \mathcal E(t)^{2} dt)^{1/2} (\int_{1}^{\infty} \frac{1}{t^{2}} \| v(t) \|_{L^{2}}^{2} dt)^{1/2}.
\end{equation}
Now since $v$ solves $(\ref{2.24})$,
\begin{equation}\label{2.44}
\frac{d}{dt} \| v \|_{L^{2}}^{2} \lesssim \| w \|_{L^{\infty}} \| v \|_{L^{4}}^{2} \| v \|_{L^{2}}^{2} + \| w \|_{L^{\infty}} \| w \|_{L^{4}}^{2} \| v \|_{L^{2}}, \qquad \| v(1) \|_{L^{2}}^{2} \lesssim R.
\end{equation}
Therefore, by H{\"o}lder's inequality,
\begin{equation}\label{2.45}
\aligned
\| v(t) \|_{L^{2}}^{4} \lesssim R^{2} + (\int_{1}^{t} \| w \|_{L^{\infty}} \| v \|_{L^{4}}^{2} \| v \|_{L^{2}} + \| w \|_{L^{\infty}} \| w \|_{L^{4}}^{2} \| v \|_{L^{2}} dt)^{2} \\ \lesssim R^{2} + \| w \|_{L_{t}^{2} L_{x}^{\infty}}^{2} (\int_{1}^{t} \| v \|_{L^{4}}^{4} \| v \|_{L^{2}}^{2} + \| w \|_{L^{4}}^{4} \| v \|_{L^{2}}^{2}).
\endaligned
\end{equation}
Therefore, by Fubini's theorem,
\begin{equation}\label{2.46}
\aligned
\int_{1}^{\infty} \frac{1}{t^{2}} \| v(t) \|_{L^{2}}^{4} dt \lesssim R^{2} + \epsilon^{2} (\int_{1}^{\infty} \| v \|_{L^{4}}^{8} dt)^{1/2} (\int_{1}^{\infty} \frac{1}{t^{2}} \| v(t) \|_{L^{2}}^{4} dt)^{1/2} + \epsilon^{2} \| w \|_{L_{t}^{8} L_{x}^{4}}^{4} (\int_{1}^{\infty} \frac{1}{t^{2}} \| v(t) \|_{L^{2}}^{4} dt)^{1/2} \\
\lesssim R^{2} + \epsilon^{2} (\int_{1}^{\infty} \frac{1}{t^{4}} \mathcal E(t)^{2} dt)^{1/2} (\int_{1}^{\infty} \frac{1}{t^{2}} \| v(t) \|_{L^{2}}^{4} dt)^{1/2} + \epsilon^{2} \| w \|_{L_{t}^{8} L_{x}^{4}}^{4} (\int_{1}^{\infty} \frac{1}{t^{2}} \| v(t) \|_{L^{2}}^{4} dt)^{1/2}.
\endaligned
\end{equation}
Therefore,
\begin{equation}\label{2.47}
\int_{1}^{\infty} \frac{1}{t^{2}} \| v(t) \|_{L^{2}}^{4} dt \lesssim R^{2} + \epsilon^{2} (\int_{1}^{\infty} \frac{1}{t^{4}} \mathcal E(t)^{2} dt) + \epsilon^{2} \int_{1}^{\infty} \| w \|_{L^{4}}^{8} dt.
\end{equation}
Plugging $(\ref{2.47})$ into $(\ref{2.43})$,
\begin{equation}\label{2.48}
\int_{1}^{\infty} \| v(t) \|_{L^{4}}^{8} dt \lesssim \int_{1}^{\infty} \frac{1}{t^{4}} \mathcal E(t)^{2} dt \lesssim R^{2} + \int_{1}^{\infty} \| w \|_{L^{4}}^{8} dt + \int_{1}^{\infty} \| w \|_{L^{\infty}}^{2} dt.
\end{equation}
Therefore, scattering follows.
\end{proof}

\section{Concentration compactness in the cubic case}
Recall from Strichartz estimates that for a solution to $(\ref{1.6})$,
\begin{equation}
\| u \|_{L_{t}^{8} L_{x}^{4}(\mathbb{R} \times \mathbb{R}^{3})} < \infty, \qquad \text{is equivalent to} \qquad \| u \|_{L_{t,x}^{5}(\mathbb{R} \times \mathbb{R}^{3})} < \infty,
\end{equation}
so Theorem $\ref{t2.1}$ implies that for $u_{0} \in B_{1,1}^{2}$, $(\ref{1.1})$ has a global solution satisfying $\| u \|_{L_{t,x}^{5}(\mathbb{R} \times \mathbb{R}^{3})} < \infty$. However, since $R$ depends on $\epsilon > 0$ and $u_{0}$, not just the norm $\| u_{0} \|_{B_{1,1}^{2}}$, $(\ref{2.48})$ does not directly give a uniform bound on
\begin{equation}\label{3.0}
\| u \|_{L_{t,x}^{5}(\mathbb{R} \times \mathbb{R}^{3})}, \qquad \text{when} \qquad \| u_{0} \|_{B_{1,1}^{2}} \leq A < \infty.
\end{equation}
Such a bound follows from a concentration compactness argument, as in \cite{dodson2018global} for the nonlinear wave equation. 

Following by now standard concentration compactness techniques, see for example \cite{keraani2001defect},
\begin{lemma}\label{l3.1}
Let $u_{n}$ be a bounded sequence in $\dot{H}^{1/2}$,
\begin{equation}\label{3.1}
\sup_{n} \| u_{n} \|_{\dot{H}^{1/2}(\mathbb{R}^{3})} \leq A < \infty,
\end{equation}
that is radially symmetric. After passing to a subsequence, assume that
\begin{equation}\label{3.2}
\lim_{n \rightarrow \infty} \| u_{n} \|_{\dot{H}^{1/2}(\mathbb{R}^{3})} = A.
\end{equation}
Then passing to a further subsequence, for any $1 \leq J < \infty$, there exist $\phi^{1}$, ..., $\phi^{J} \in \dot{H}^{1/2}$ such that
\begin{equation}\label{3.3}
u_{n} = \sum_{j = 1}^{J} e^{i t_{n}^{j} (\lambda_{n}^{j})^{2} \Delta} \frac{1}{\lambda_{n}^{j}} \phi^{j}(\frac{x}{\lambda_{n}^{j}}) + w_{n}^{J},
\end{equation}
where
\begin{equation}\label{3.4}
\sum_{j = 1}^{J} \| \phi^{j} \|_{\dot{H}^{1/2}}^{2} + \lim_{n \rightarrow \infty} \| w_{n}^{J} \|_{\dot{H}^{1/2}}^{2} = A^{2},
\end{equation}
\begin{equation}\label{3.5}
\lim_{J \rightarrow \infty} \limsup_{n \rightarrow \infty} \| e^{it \Delta} w_{n}^{J} \|_{L_{t,x}^{5}(\mathbb{R} \times \mathbb{R}^{3})} = 0,
\end{equation}
and for $j \neq k$,
\begin{equation}\label{3.6}
\lim_{n \rightarrow \infty} |\ln(\frac{\lambda_{n}^{j}}{\lambda_{n}^{k}})| + |t_{n}^{j} - t_{n}^{k}| = \infty.
\end{equation}
\end{lemma}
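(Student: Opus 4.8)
The plan is to establish the linear profile decomposition by the now-standard method of Bahouri--G\'erard and Keraani \cite{keraani2001defect}, specialized to radial data in the scaling-critical space $\dot{H}^{1/2}(\mathbb{R}^{3})$. The engine of the argument is an \emph{inverse Strichartz inequality}: if $f_{n}$ is radial and bounded in $\dot{H}^{1/2}$ with $\liminf_{n} \| e^{it\Delta} f_{n} \|_{L_{t,x}^{5}} \geq \epsilon$, then after passing to a subsequence there exist scales $\lambda_{n} > 0$ and times $t_{n}$ so that the normalized sequence $g_{n}(x) = \lambda_{n} (e^{-it_{n} \lambda_{n}^{2} \Delta} f_{n})(\lambda_{n} x)$, which has the same $\dot{H}^{1/2}$ norm as $f_{n}$, converges weakly in $\dot{H}^{1/2}$ to some $\phi \neq 0$ obeying $\| \phi \|_{\dot{H}^{1/2}} \gtrsim_{A} \epsilon^{\beta}$ for a fixed power $\beta$. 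To prove this I would combine a refined Strichartz estimate --- bounding $\| e^{it\Delta} f \|_{L_{t,x}^{5}}$ by $\| f \|_{\dot{H}^{1/2}}^{1 - \theta}$ times a positive power of a Littlewood--Paley supremum of $e^{it\Delta} f$ in a weaker norm --- with the rescaling that normalizes the dominant frequency to size one together with the associated time. The role of the radial hypothesis is to let me take the spatial concentration centre to be the origin: since $e^{it\Delta}$ preserves radiality, the extracted weak limit is radial, and the radial Sobolev (Strauss) decay prevents mass from escaping to a moving spatial translate, so no translation parameters $x_{n}^{j}$ occur in $(\ref{3.3})$.

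Next I would run the usual inductive extraction. Applying the inverse Strichartz inequality to $u_{n}$ produces $(\lambda_{n}^{1}, t_{n}^{1}, \phi^{1})$; set $w_{n}^{1} = u_{n} - e^{it_{n}^{1} (\lambda_{n}^{1})^{2} \Delta} (\lambda_{n}^{1})^{-1} \phi^{1}(x / \lambda_{n}^{1})$. Weak convergence in $\dot{H}^{1/2}$ gives the one-step Pythagorean identity $\| w_{n}^{1} \|_{\dot{H}^{1/2}}^{2} = \| u_{n} \|_{\dot{H}^{1/2}}^{2} - \| \phi^{1} \|_{\dot{H}^{1/2}}^{2} + o(1)$. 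Iterating on $w_{n}^{1}$ yields $(\lambda_{n}^{2}, t_{n}^{2}, \phi^{2})$, and so on; telescoping the one-step identities and using $\| u_{n} \|_{\dot{H}^{1/2}} \to A$ gives $(\ref{3.4})$. Since the left-hand side of $(\ref{3.4})$ is bounded by $A^{2}$ while each extracted profile satisfies $\| \phi^{j} \|_{\dot{H}^{1/2}} \gtrsim_{A} \epsilon_{j}^{\beta}$ with $\epsilon_{j} = \lim_{n} \| e^{it\Delta} w_{n}^{j - 1} \|_{L_{t,x}^{5}}$, the profile norms are summable and hence $\epsilon_{j} \to 0$; this forces the Strichartz smallness $(\ref{3.5})$.

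The asymptotic orthogonality $(\ref{3.6})$ I would obtain as part of the induction. Throughout the extraction one maintains that $w_{n}^{k-1}$ converges weakly to zero in each of the previously chosen frames $j = 1, \dots, k-1$. If the new parameters failed to separate from an earlier pair, say $|\ln(\lambda_{n}^{k} / \lambda_{n}^{j})| + |t_{n}^{k} - t_{n}^{j}|$ stayed bounded, then the two extraction operators would differ only by a fixed (subsequential) scaling and time shift, so weak convergence to zero in the $j$-th frame would force the weak limit $\phi^{k}$ defining the $k$-th profile to vanish, a contradiction. Given $(\ref{3.6})$, the rescaled bubbles become asymptotically orthogonal in $\dot{H}^{1/2}$, so expanding $\| u_{n} \|_{\dot{H}^{1/2}}^{2}$ directly also yields $(\ref{3.4})$ with all cross terms vanishing in the limit.

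The main obstacle is the inverse Strichartz inequality, and in particular the verification that radial symmetry genuinely removes the translation parameter so that only the scaling and time parameters in $(\ref{3.3})$ survive. Once the inverse inequality is in hand, the remaining bookkeeping --- the simultaneous consistency of the weak limits across frames, the Pythagorean expansion, and the orthogonality --- is routine.
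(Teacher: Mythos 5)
Your sketch is the standard Bahouri--G\'erard/Keraani profile-decomposition argument (inverse Strichartz extraction of profiles, inductive Pythagorean expansion, parameter orthogonality, and radial symmetry eliminating the translation parameters), which is exactly what the paper relies on: it states Lemma \ref{l3.1} without proof, invoking the ``by now standard concentration compactness techniques'' of \cite{keraani2001defect}. Your proposal therefore takes essentially the same approach as the paper, and the only caveat is that it is a plan whose key technical input (the refined/inverse Strichartz inequality at $\dot{H}^{1/2}$ regularity) is asserted rather than proved, matching the level of detail the paper itself provides.
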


Now let $u_{n}$ be a sequence in $B_{1,1}^{2}(\mathbb{R}^{3})$ with the uniform bound
\begin{equation}\label{3.7}
\| u_{n} \|_{B_{1,1}^{2}} \leq A.
\end{equation}
Then by the Sobolev embedding theorem,
\begin{equation}\label{3.8}
\| u_{n} \|_{\dot{H}^{1/2}(\mathbb{R}^{3})} \lesssim A,
\end{equation}
so apply Lemma $\ref{l3.1}$, and observe that for any $J$,
\begin{equation}\label{3.9}
u_{n} = \sum_{j = 1}^{J} e^{i t_{n}^{j} (\lambda_{n}^{j})^{2} \Delta} \frac{1}{\lambda_{n}^{j}} \phi^{j}(\frac{x}{\lambda_{n}^{j}}) + w_{n}^{J}.
\end{equation}

Next, observe that Lemma $\ref{l3.1}$ implies that for any fixed $j$,
\begin{equation}\label{3.10}
e^{-i t_{n}^{j} \Delta} (\lambda_{n}^{j} u_{n}( \lambda_{n}^{j} \cdot)) \rightharpoonup \phi^{j}, \qquad \text{weakly in} \qquad \dot{H}^{1/2}(\mathbb{R}^{3}).
\end{equation}
Using dispersive estimates, for any $t \in \mathbb{R}$, since $B_{1,1}^{2}$ is invariant under the scaling symmetry $(\ref{1.9})$,
\begin{equation}\label{3.11}
\| e^{it \Delta} e^{-i t_{n}^{j} \Delta} (\lambda_{n}^{j} u_{n}(\lambda_{n}^{j} \cdot)) \|_{L^{\infty}} \lesssim \frac{1}{|t - t_{n}^{j}|^{1/2}} \| u_{n} \|_{B_{1,1}^{2}},
\end{equation}
in particular, if $t_{n}^{j} \rightarrow \pm \infty$ along a subsequence, interpolating $(\ref{3.11})$ and the Sobolev embedding theorem $\dot{H}^{1/2} \hookrightarrow L^{3}$,
\begin{equation}\label{3.12}
\| e^{it \Delta} e^{-i t_{n}^{j} \Delta} (\lambda_{n}^{j} u_{n}(\lambda_{n}^{j} \cdot)) \|_{L_{t,x}^{5}([-T, T] \times \mathbb{R}^{3})} = 0,
\end{equation}
for any fixed $0 < T < \infty$. Since $u_{n} \rightharpoonup \phi$ weakly in $\dot{H}^{1/2}$ implies
\begin{equation}\label{3.13}
e^{it \Delta} u_{n} \rightharpoonup e^{it \Delta} \phi, \qquad \text{weakly in} \qquad L_{t,x}^{5},
\end{equation}
$(\ref{3.12})$ implies that $\phi^{j} = 0$ if $t_{n}^{j} \rightarrow \pm \infty$ along a subsequence.
\begin{remark}
The fact that weak convergence implies $(\ref{3.13})$ follows from Strichartz estimates and approximating a function in $L_{t,x}^{5/4}$ with a smooth, compactly supported function and a small remainder.
\end{remark}

Therefore, the $t_{n}^{j}$'s must be uniformly bounded for any $j$, and after passing to a subsequence, $t_{n}^{j} \rightarrow t^{j} \in \mathbb{R}$ for any $j$. Since
\begin{equation}\label{3.14}
e^{i t_{n}^{j} (\lambda_{n}^{j})^{2} \Delta} \frac{1}{\lambda_{n}^{j}} \phi^{j}(\frac{x}{\lambda_{n}^{j}}) = \frac{1}{\lambda_{n}^{j}} (e^{i t_{n}^{j} \Delta} \phi^{j})(\frac{x}{\lambda_{n}^{j}}),
\end{equation}
replacing $\phi^{j}$ with $e^{it^{j} \Delta} \phi^{j}$ and absorbing the remainder into $w_{n}^{J}$, it is possible to set $t_{n}^{j} \equiv 0$ for all $j$ in $(\ref{3.9})$. Therefore,
\begin{equation}\label{3.15}
u_{n} = \sum_{j = 1}^{J} \frac{1}{\lambda_{n}^{j}} \phi^{j}(\frac{x}{\lambda_{n}^{j}}) + w_{n}^{J}.
\end{equation}
By Theorem $\ref{t2.1}$, for any $j$, let $u^{j}$ be the solution to $(\ref{2.1})$ with initial data $\phi^{j}$. Then for any $j$,
\begin{equation}\label{3.16}
\| u^{j} \|_{L_{t,x}^{5}(\mathbb{R} \times \mathbb{R}^{3})} < \infty.
\end{equation}

Furthermore, $(\ref{3.4})$, $(\ref{3.5})$, $(\ref{3.6})$, and small data arguments imply that if $u^{(n)}(t,x)$ is the solution to $(\ref{2.1})$ with initial data $u_{n}(x)$,
\begin{equation}\label{3.17}
\lim_{n \rightarrow \infty} \| u^{(n)} \|_{L_{t,x}^{5}(\mathbb{R} \times \mathbb{R}^{3})}^{5} \leq \sum_{j = 1}^{\infty} \| u^{j} \|_{L_{t,x}^{5}(\mathbb{R} \times \mathbb{R}^{3})}^{5} < \infty.
\end{equation}
For all but finitely many $j$'s, say all but $j_{0}$, $\| u^{j} \|_{L_{t}^{\infty} \dot{H}^{1/2}} \leq \epsilon$, so by small data arguments and $(\ref{3.4})$,
\begin{equation}
\sum_{j \geq j_{0}} \| u^{j} \|_{L_{t,x}^{5}(\mathbb{R} \times \mathbb{R}^{3})}^{2} \lesssim A.
\end{equation}
Therefore, there exists a function $f : [0, \infty) \rightarrow [0, \infty)$ such that if $\| u_{0} \|_{B_{1,1}^{2}} \leq A$ is radial, then $(\ref{2.1})$ has a global solution that satisfies the bound
\begin{equation}\label{3.18}
\| u \|_{L_{t,x}^{5}(\mathbb{R} \times \mathbb{R}^{3})} \leq f(A) < \infty.
\end{equation}

\medskip

Observe that $(\ref{3.18})$ gives no explicit bound on the scattering size. In general, the bounds obtained from a concentration compactness argument are likely far from optimal. For example, in \cite{MR2154347},
\begin{equation}\label{3.19}
\| u \|_{L_{t,x}^{\frac{2(d + 2)}{d - 2}}(\mathbb{R} \times \mathbb{R}^{d})} \leq C \exp(C E^{C}),
\end{equation}
where $C(d)$ is a large constant, $E$ is the energy $(\ref{1.8})$, and $u$ is a solution to the energy-critical problem ($s_{c} = 1$) with radially symmetric initial data.

\section{A local result for $(\ref{1.1})$ when $1 < p < 3$}

In the second part of the paper, we will prove explicit bounds on the scattering size of a solution to $(\ref{1.1})$ with radially symmetric initial data in $B_{1,1}^{\frac{d}{2} + s_{c}}$, when $0 < s_{c} < 1$ and $1 < p < 3$. Note that the restrictions on $s_{c}$ and $p$ require $d \geq 3$.

As in the cubic case, the first step is to rescale and obtain good bounds on the interval $[0, 1]$. The space $L_{t,x}^{\frac{(d + 2)(p - 1)}{2}}(\mathbb{R} \times \mathbb{R}^{d})$ is also invariant under the rescaling $(\ref{1.9})$, so rescale the initial data so that
\begin{equation}\label{4.1}
\| u \|_{L_{t,x}^{\frac{(d + 2)(p - 1)}{2}}([0, 1] \times \mathbb{R}^{d})} \leq \delta,
\end{equation}
for some $\delta \ll 1$.

\begin{lemma}\label{l4.1}
If $u$ is a solution to $(\ref{1.1})$ on $[0, 1]$ with initial data $u_{0} \in B_{1,1}^{\frac{d}{2} + s_{c}}$, and $u$ satisfies $(\ref{4.1})$, then for any $j \in \mathbb{Z}_{< 0}$,
\begin{equation}\label{4.2}
\| \nabla u \|_{L_{t}^{2} L_{x}^{\frac{2d}{d - 2}}([2^{j}, 2^{j + 1}] \times \mathbb{R}^{d})} \lesssim 2^{j \frac{s_{c} - 1}{2}} \| u_{0} \|_{B_{1,1}^{\frac{d}{2} + s_{c}}(\mathbb{R}^{d})},
\end{equation}
\end{lemma}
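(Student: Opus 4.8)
The plan is to establish the frequency-localized Strichartz bound \eqref{4.2} by running a bootstrap/continuity argument on each dyadic subinterval $[2^j, 2^{j+1}]$ for $j < 0$, scaling the smallness hypothesis \eqref{4.1} down to these short-time windows and feeding it into the standard subcritical local theory. The key observation is that the exponent $\tfrac{(d+2)(p-1)}{2}$ appearing in \eqref{4.1} is exactly the scattering/Strichartz exponent (space-time $L^{\frac{(d+2)(p-1)}{2}}_{t,x}$ is the critical norm at regularity $s_c$ for the nonlinearity $|u|^{p-1}u$), so that controlling this single norm on $[0,1]$ should, after Littlewood--Paley decomposition, propagate to control of $\|\nabla u\|_{L^2_t L^{2d/(d-2)}_x}$ on each dyadic piece with the stated power of $2^j$.

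First I would apply $\nabla P_k$ to Duhamel's formula \eqref{1.3} on the interval $I_j = [2^j, 2^{j+1}]$, writing
\begin{equation}
\nabla u(t) = e^{i(t-2^j)\Delta}\nabla u(2^j) - i\int_{2^j}^{t} e^{i(t-\tau)\Delta}\nabla\big(|u|^{p-1}u\big)\,d\tau,
\end{equation}
and estimate in the Strichartz space $\dot S^1(I_j)$ using \eqref{1.13}. The linear term is controlled by $\|\nabla u(2^j)\|_{L^2}$, which I would bound inductively (or by Sobolev embedding from the Besov data, distributing the derivatives across frequencies and collecting the factor $2^{j(s_c-1)/2}$ from the time-length $|I_j|^{1/2} \sim 2^{j/2}$ together with the gap between the $\dot H^1$ scaling of $\nabla u$ and the critical $\dot H^{s_c}$ scaling of the data). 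For the nonlinear term, the fractional chain/product rule and Hölder in space-time reduce the estimate to $\||u|^{p-1}\|$ times $\|\nabla u\|$, where the factor $\||u|^{p-1}\|$ is measured in the space dual-scaled to the critical norm and is therefore controlled by $\|u\|_{L^{(d+2)(p-1)/2}_{t,x}(I_j)}^{p-1}$. Since $I_j \subset [0,1]$, hypothesis \eqref{4.1} forces this factor to be at most $\delta^{p-1} \ll 1$, giving a contraction that closes the bootstrap.

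The precise route to the power $2^{j\frac{s_c-1}{2}}$ is a scaling computation: rescaling $u(t,x) \mapsto 2^{-j\cdot 2/(p-1)} u(2^{-2j}t, 2^{-j}x)$ sends $I_j$ to $[1,2]$ and preserves both the $B^{d/2+s_c}_{1,1}$ and the $L^{(d+2)(p-1)/2}_{t,x}$ norms, so on the rescaled problem one has an $O(1)$-length interval with unit-scale smallness, on which the local theory gives $\|\nabla u\|_{L^2_t L^{2d/(d-2)}_x} \lesssim \|u_0\|_{B^{d/2+s_c}_{1,1}}$; undoing the scaling then produces exactly the factor $2^{j(s_c-1)/2}$, since $\nabla u$ carries regularity $1$ while the invariant norm sits at $s_c$. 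This reduces the lemma to a single unit-scale estimate, which is the cleanest way to present the argument.

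\textbf{The main obstacle} will be making the nonlinear estimate honestly subcritical despite the loss of derivatives: one full derivative must land somewhere, and the remaining $p-1$ copies of $u$ must be paired with the $L^{(d+2)(p-1)/2}_{t,x}$ norm so that the exponents close under Hölder and so that the fractional Leibniz rule for $|u|^{p-1}u$ (which for $1<p<3$ has limited smoothness in $u$) is applicable. I expect the delicate bookkeeping to be in verifying that the admissible-pair conditions \eqref{1.14}--\eqref{1.15} are met simultaneously for the derivative factor and for the nonlinear factor, and in confirming that the constant depends only on $\|u_0\|_{B^{d/2+s_c}_{1,1}}$ and not on the particular frequency $j$ (uniformity in $j$ being exactly what the scaling reduction guarantees). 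The restriction $p<3$ should enter here to keep $|u|^{p-1}$ from being too rough for the chain rule, while $d\ge 3$ is needed for the endpoint Strichartz estimate \eqref{1.13} to hold.
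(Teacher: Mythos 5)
Your scaling reduction is fine as a normalization (both $B_{1,1}^{\frac{d}{2}+s_{c}}$ and $L_{t,x}^{\frac{(d+2)(p-1)}{2}}$ are invariant under \eqref{1.9}), but it only renames the problem: the unit-scale estimate $\|\nabla u\|_{L_{t}^{2}L_{x}^{2d/(d-2)}([1,2])}\lesssim\|u_{0}\|_{B_{1,1}^{d/2+s_{c}}}$ that you invoke from ``the standard subcritical local theory'' is precisely the content of the lemma, and the standard theory does not supply it. The genuine gap is in your treatment of the linear term: you run Strichartz on $I_{j}=[2^{j},2^{j+1}]$ from the data $u(2^{j})$ and bound the homogeneous piece by $\|\nabla u(2^{j})\|_{L^{2}}$. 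But $B_{1,1}^{d/2+s_{c}}$ embeds only into $\dot{H}^{s_{c}}$, not $\dot{H}^{1}$ (since $s_{c}<1$), the free flow preserves $\dot{H}^{1}$, so $e^{it\Delta}u_{0}\notin\dot{H}^{1}$ in general and hence $\|\nabla u(2^{j})\|_{L^{2}}$ is typically infinite; no Sobolev embedding or ``distribution of derivatives across frequencies'' produces it from the Besov data. Note also that the required factor $2^{j\frac{s_{c}-1}{2}}$ is \emph{large} as $j\to-\infty$ (the lemma quantifies blow-up of the $\dot H^{1}$-level norm as $t\to 0$), so it cannot come from the interval length $|I_{j}|^{1/2}\sim 2^{j/2}$. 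What actually bridges the gap between the $s_{c}$-regularity of the data and the full derivative in \eqref{4.2} is dispersive smoothing from the $L^{1}$-based norm: the paper interpolates, frequency by frequency, the Bernstein-type bound \eqref{4.8} with the dispersive bounds \eqref{4.9}--\eqref{4.10} to obtain the pointwise decay $\|\nabla e^{it\Delta}u_{0}\|_{L^{2d/(d-2)}}\lesssim t^{-\frac{1}{2}-\frac{1-s_{c}}{2}}\|u_{0}\|_{B_{1,1}^{d/2+s_{c}}}$ in \eqref{4.11}, whose $L_{t}^{2}(I_{j})$ norm yields exactly $2^{j\frac{s_{c}-1}{2}}$. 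This mechanism is absent from your proposal and cannot be replaced by the $L^{2}$-based Strichartz estimate \eqref{1.13} alone.

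A second, related omission concerns the Duhamel term: the contribution of times $\tau\ll 2^{j}$ is not controlled by the smallness \eqref{4.1} paired with Strichartz on $I_{j}$. The paper splits the integral at $2^{j_{\delta}}\approx\delta 2^{j}$, treats the recent past by Strichartz, the chain rule, and \eqref{4.1} (essentially your step, which is fine), but must treat $[0,2^{j_{\delta}}]$ with the dispersive estimate \eqref{4.14}; it then closes a bootstrap for the weighted quantity $\sup_{j<0}2^{j\frac{1-s_{c}}{2}}\|\nabla u\|_{L_{t}^{2}L_{x}^{2d/(d-2)}(I_{j})}$ in \eqref{4.15}, rather than an interval-by-interval induction on $\dot{H}^{1}$ data at time $2^{j}$. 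Without these two ingredients your contraction on each $I_{j}$ does not close.
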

\begin{proof}
The local solution may be obtained by showing that the operator
\begin{equation}\label{4.3}
\Phi(u(t)) = e^{it \Delta} u_{0} - i \int_{0}^{t} e^{i(t - \tau) \Delta} |u(\tau)|^{p - 1} u(\tau) d\tau,
\end{equation}
has a unique fixed point in $\dot{S}^{s_{c}}([0, 1] \times \mathbb{R}^{d})$.

Interpolating the Sobolev embedding theorem,
\begin{equation}\label{4.4}
\| P_{k} e^{it \Delta} u_{0} \|_{L^{\infty}} \lesssim 2^{k \frac{2}{p - 1}} 2^{k(d - \frac{2}{p - 1})} \| P_{k} u_{0} \|_{L^{1}},
\end{equation}
with the dispersive estimate,
\begin{equation}\label{4.5}
\| P_{k} e^{it \Delta} u_{0} \|_{L^{\infty}} \lesssim t^{-d/2} 2^{-k(d - \frac{2}{p - 1})} 2^{k(d - \frac{2}{p - 1})} \| P_{k} u_{0} \|_{L^{1}},
\end{equation}
where $P_{k}$ is the usual Littlewood--Paley projection operator for any $k \in \mathbb{Z}$,
\begin{equation}\label{4.6}
\| e^{it \Delta} u_{0} \|_{L^{\infty}} \lesssim t^{-\frac{1}{p - 1}} \| u_{0} \|_{B_{1,1}^{\frac{d}{2} + s_{c}}},
\end{equation}
and
\begin{equation}\label{4.7}
\| \nabla e^{it \Delta} u_{0} \|_{L^{\infty}} \lesssim t^{-\frac{1}{p - 1} - \frac{1}{2}} \| u_{0} \|_{B_{1,1}^{\frac{d}{2} + s_{c}}}.
\end{equation}

Interpolating $(\ref{4.5})$ with the Sobolev embedding theorem,
\begin{equation}\label{4.8}
\| \nabla P_{k} e^{it \Delta} u_{0} \|_{L^{2}} \lesssim 2^{k(1 - s_{c})} 2^{k(\frac{d}{2} + s_{c})} \| P_{k} u_{0} \|_{L^{1}},
\end{equation}
and
\begin{equation}\label{4.9}
\| \nabla P_{k} e^{it \Delta} u_{0} \|_{L^{\frac{2d}{d - 2}}} \lesssim 2^{-k s_{c}} \frac{1}{t} 2^{k(\frac{d}{2} + s_{c})} \| P_{k} u_{0} \|_{L^{1}}.
\end{equation}
Interpolating this bound with
\begin{equation}\label{4.10}
\| \nabla P_{k} e^{it \Delta} u_{0} \|_{L^{\frac{2d}{d - 2}}} \lesssim 2^{k(2 - s_{c})} 2^{k(\frac{d}{2} + s_{c})} \| P_{k} u_{0} \|_{L^{1}},
\end{equation}
we obtain
\begin{equation}\label{4.11}
\| \nabla e^{it \Delta} u_{0} \|_{L^{\frac{2d}{d - 2}}} \lesssim t^{-\frac{1}{2} - \frac{1 - s_{c}}{2}} \| u_{0} \|_{B_{1,1}^{\frac{d}{2} + s_{c}}}.
\end{equation}
Therefore, for any $j \in \mathbb{Z}_{< 0}$,
\begin{equation}\label{4.12}
\| \nabla e^{it \Delta} u_{0} \|_{L_{t}^{2} L_{x}^{\frac{2d}{d - 2}}([2^{j}, 2^{j + 1}] \times \mathbb{R}^{d})} \lesssim 2^{j \frac{s_{c} - 1}{2}} \| u_{0} \|_{B_{1,1}^{\frac{d}{2} + s_{c}}}.
\end{equation}

By Strichartz estimates, for any $t \in [2^{j}, 2^{j + 1}]$, let $j_{\delta}$ be the integer closest to $\log_{2}(\delta 2^{j})$. By Strichartz estimates and the chain rule, and by $(\ref{4.1})$,
\begin{equation}\label{4.13}
\| \nabla \int_{2^{j_{\delta}}}^{t} e^{i(t - \tau) \Delta} |u(\tau)|^{p - 1} u(\tau) d\tau \|_{L_{t}^{2} L_{x}^{\frac{2d}{d - 2}}([2^{j}, 2^{j + 1}] \times \mathbb{R}^{d})} \lesssim \delta^{2} \| \nabla u \|_{L_{t}^{2} L_{x}^{\frac{2d}{d - 2}}([2^{j_{\delta}}, 2^{j + 1}] \times \mathbb{R}^{d})}.
\end{equation}
Meanwhile, the dispersive estimate combined with the Sobolev embedding theorem $\| u \|_{L_{t}^{\infty} L_{x}^{\frac{d(p - 1)}{2}}([0, 1] \times \mathbb{R}^{d})} \lesssim \| u_{0} \|_{B_{1,1}^{\frac{d}{2} + s_{c}}}$,
\begin{equation}\label{4.14}
\| \int_{0}^{2^{j_{\delta}}} e^{i(t - \tau) \Delta} |u(\tau)|^{p - 1} u(\tau) d\tau \|_{L^{\frac{2d}{d - 2}}} \lesssim \| u \|_{L_{t}^{\infty} L_{x}^{\frac{d(p - 1)}{2}}([0, 1] \times \mathbb{R}^{d})}^{p - 1} \| \nabla u \|_{L_{t}^{1} L_{x}^{\frac{2d}{d - 2}}([0, 2^{j_{\delta}}] \times \mathbb{R}^{d})}.
\end{equation}
Therefore, for $\delta(\| u_{0} \|_{B_{1,1}^{\frac{d}{2} + s_{c}}}) > 0$ sufficiently small,
\begin{equation}\label{4.15}
\sup_{j < 0} 2^{j \frac{1 - s_{c}}{2}}  \| \nabla \Phi(u) \|_{L_{t}^{2} L_{x}^{\frac{2d}{d - 2}}([2^{j}, 2^{j + 1}] \times \mathbb{R}^{d})} \lesssim \| u_{0} \|_{B_{1,1}^{\frac{d}{2} + s_{c}}} + \epsilon \cdot \sup_{j < 0} 2^{j \frac{1 - s_{c}}{2}}  \| \nabla u \|_{L_{t}^{2} L_{x}^{\frac{2d}{d - 2}}([2^{j}, 2^{j + 1}] \times \mathbb{R}^{d})},
\end{equation}
for some $\epsilon > 0$. Thus, $(\ref{4.2})$ holds.
\end{proof}

Now suppose $(\ref{1.1})$ with initial data $u_{0}$ has a solution on the maximal interval $[0, T)$, where $1 < T \leq \infty$. Again decompose $u = v + w$, where $v$ and $w$ solve
\begin{equation}\label{4.16}
i v_{t} + \Delta v = |v + w|^{p - 1} (v + w), \qquad v(0) = 0,
\end{equation}
and
\begin{equation}\label{4.17}
i w_{t} + \Delta w = 0, \qquad w(0) = u_{0},
\end{equation}
on $[0, \infty)$. Let $\mathcal E(t)$ denote the pseudoconformal energy of $v$,
\begin{equation}\label{4.18}
\mathcal E(t) = \| (x + 2it \nabla) v \|_{L^{2}}^{2} + \frac{8}{p + 1} t^{2} \| v \|_{L^{p + 1}}^{p + 1} = \| xv \|_{L^{2}}^{2} + 2 \langle xv, 2it \nabla v \rangle + 8t^{2} E(t).
\end{equation}

\begin{lemma}\label{l4.2}
If $u_{0} \in B_{1,1}^{\frac{d}{2} + s_{c}}$ is radially symmetric, and $(\ref{1.1})$ has a local solution satisfying $(\ref{4.1})$, then $\mathcal E(1) \lesssim 1$ for $\delta(\| u_{0} \|_{B_{1,1}^{\frac{d}{2} + s_{c}}}) > 0$ sufficiently small.
\end{lemma}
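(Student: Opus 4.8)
The plan is to bound $\mathcal{E}(1)$ by estimating each of its three constituent pieces: $\|xv(1)\|_{L^2}^2$, the cross term $\langle xv(1), 2i\nabla v(1)\rangle$, and $8\,E(1)$, following the template already established in the cubic case (see the derivation of $(\ref{2.23})$). The key structural input is Duhamel's formula for $v$: since $v(0)=0$, we have $v(1) = -i\int_0^1 e^{i(1-\tau)\Delta}|v+w|^{p-1}(v+w)\,d\tau$, with $u = v+w$ solving $(\ref{1.1})$. The conjugation identity $(x+2it\nabla)e^{it\Delta}f = e^{it\Delta}xf$ from $(\ref{2.12})$ remains the workhorse: it lets me convert the weighted operator $(x+2i\nabla)$ acting on the Duhamel integral into a spatial weight on the nonlinearity, paying only for the mismatch between the full propagation time $1$ and the elapsed time $1-\tau$, exactly as in the decomposition $(\ref{2.11})$.

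\emph{First} I would write $(x+2i\nabla)v(1) = -i\int_0^1 (x+2i(1-\tau)\nabla)e^{i(1-\tau)\Delta}\,|u|^{p-1}u\,d\tau - i\int_0^1 2i\tau\nabla e^{i(1-\tau)\Delta}|u|^{p-1}u\,d\tau$. For the first integral, the conjugation identity reduces matters to controlling $\|x\,|u|^{p-1}u\|_{L_t^1 L_x^2}$, which by Hölder and the radial Sobolev embedding I would bound by $\|xu\|_{L_t^\infty B_{1,2}^{s_c}}\cdot\|u\|_{\dot S^{s_c}}^{p-1}$, finite by Lemma $\ref{l4.1}$ and the $B_{1,1}^{d/2+s_c}$ hypothesis. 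For the second integral I would invoke the small-scale splitting $u = u_1 + u_2$ with $\|\nabla u_i\|$ bounds of the form $t^{-\alpha_i}$ (the analogue of $(\ref{2.14})$, available from Lemma $\ref{l4.1}$ upon summing the dyadic estimates $(\ref{4.2})$ over $j<0$), so that the weight $\tau$ absorbs the singularity and Strichartz estimates close the bound, as in $(\ref{2.15})$.

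\emph{Next}, to control the remaining pieces I would split the initial data via the cutoff $\chi(x/R)$ as in $(\ref{2.16})$--$(\ref{2.21})$: the exterior part $(1-\chi(x/R))u_0$ has small weighted Besov norm by the choice of $R$, giving a bound of size $\epsilon$ through the radial Sobolev embedding, while the interior part $\chi(x/R)u_0$ contributes $\|x\chi(x/R)u_0\|_{L^2} \lesssim R^{d/2}\|u_0\|_{L^{q}}$ by Hölder, controlled by a power of $R$ times $\|u_0\|_{B_{1,1}^{d/2+s_c}}$. Here, however, $v(1)$ carries not the data directly but the nonlinear Duhamel term, so the cutoff argument must be applied to the free evolution piece implicit in $u$; the weighted bounds from Lemma $\ref{l4.1}$ together with the dispersive estimates $(\ref{4.6})$--$(\ref{4.11})$ supply the needed control on $w$ and hence on the full solution $u$ on $[0,1]$. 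The energy term $8\,E(1)$ is handled by the straightforward bounds $\|\nabla v(1)\|_{L^2}$ and $\|v(1)\|_{L^{p+1}}^{p+1}\lesssim 1$ that follow from the same Strichartz and Lemma $\ref{l4.1}$ input.

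\emph{The hard part} will be adapting the exponents from the cubic case $p=3$ to general $1<p<3$: the dispersive decay rates, the admissible-pair bookkeeping in $(\ref{4.4})$--$(\ref{4.11})$, and the precise powers of $\tau$ that must be absorbed all depend on $s_c$ and $p$, and one must verify that the scaling weights $2^{j(s_c-1)/2}$ in $(\ref{4.2})$ sum appropriately over $j<0$ to yield a finite constant. In particular, the radial Sobolev embedding loses a different number of derivatives than at $d=3$, $s_c=1/2$, so I would track carefully that the product of norms appearing in the analogue of $(\ref{2.13})$ stays within the range where $\|u_0\|_{B_{1,1}^{d/2+s_c}}$ controls everything. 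Granting that all exponents balance — which is essentially a dimensional-analysis check since every norm in sight is scale-invariant or scales with a fixed power — the estimate $\mathcal{E}(1)\lesssim_{\|u_0\|, R} 1$ follows, and choosing $\delta$ small enough (depending on $\|u_0\|_{B_{1,1}^{d/2+s_c}}$) to run the fixed-point argument of Lemma $\ref{l4.1}$ completes the proof.
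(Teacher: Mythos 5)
Your overall template for bounding $\|(x+2i\nabla)v(1)\|_{L^{2}}$ --- use $v(0)=0$ to write $v(1)$ as the Duhamel integral, split it via the identity behind $(\ref{2.11})$, and control the $\tau\nabla$ piece through the dyadic bounds of Lemma \ref{l4.1} --- is indeed the paper's route (cf. $(\ref{4.21})$--$(\ref{4.22})$). However, there are genuine gaps. First, the cutoff $\chi(x/R)$ decomposition you import from Section 2 has no place here and would undermine the lemma: in the setup of Sections 4--5 the entire datum is put into $w$ and $v(0)=0$, so $v(1)$ contains no free-evolution piece to cut off, and any constant involving $R(\epsilon,u_{0})$ depends on the profile of $u_{0}$ rather than only on its norm. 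The conclusion actually needed (and used in $(\ref{6.23})$, where the implicit constants depend only on $p$ and $d$) is $\mathcal E(1)\lesssim 1$ with all $u_{0}$-dependence absorbed into the choice of $\delta(\| u_{0}\|_{B_{1,1}^{\frac{d}{2}+s_{c}}})$; your final bound $\mathcal E(1)\lesssim_{\|u_{0}\|,R}1$ is too weak for this, and the $R$-dependence is precisely what the general-$p$ argument is designed to avoid. Relatedly, your plan to estimate the three expanded pieces $\|xv(1)\|_{L^{2}}^{2}$, the cross term, and $8E(1)$ separately does not work: only the combination $(x+2i\nabla)v(1)$ is controlled, while $\|\nabla v(1)\|_{L^{2}}$ (hence $E(1)$ by itself) is not --- $(\ref{4.19})$ only gives $\nabla v(1)\in L^{2}+L^{\frac{2d}{d-2}}$ --- and the paper obtains $\|v(1)\|_{L^{p+1}}\lesssim 1$ by interpolating $(\ref{4.19})$ with $(\ref{4.20})$, not from an $L^{2}$ gradient bound.

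Second, the weighted nonlinearity $\| x|u|^{p-1}u\|_{L_{t}^{1}L_{x}^{2}}$ cannot be handled by the cubic-case pattern $\|xu\|_{L_{t,x}^{\infty}}\|u\|_{\dot S^{s_{c}}}^{p-1}$: for $1<p<3$ the weight must be distributed as $|x|=(|x|^{\frac{2}{p-1}})^{\frac{p-1}{2}}$ over the factors, and the radial Sobolev embedding $\||x|^{\frac{2}{p-1}}u\|_{L^{\infty}}\lesssim\|u\|_{\dot H^{s_{c}}}$ is available only when $s_{c}\geq \frac{1}{2}$; for $0<s_{c}<\frac{1}{2}$ the paper must switch to weighted radial Strichartz estimates, using $\||x|^{\frac{d-1}{2}}u\|_{L_{t}^{2/(\frac{1}{2}-s_{c})}L_{x}^{\infty}}$, a case split your proposal does not anticipate (see $(\ref{4.23})$--$(\ref{4.24})$). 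Moreover, in both cases the paper deliberately retains a factor $\|u\|_{L_{t,x}^{\frac{(d+2)(p-1)}{2}}}^{c}\lesssim\delta^{c}$ in these estimates (and $\delta^{p-1}$ in $(\ref{4.22})$); these powers of $\delta$ are what beat the accompanying powers of $\|u_{0}\|_{B_{1,1}^{\frac{d}{2}+s_{c}}}$ once $\delta$ is chosen small, yielding the uniform bound $\mathcal E(1)\lesssim 1$. Your argument invokes the smallness of $\delta$ only to run the fixed point of Lemma \ref{l4.1}, without extracting any smallness from the weighted nonlinear estimates themselves, so the required uniformity does not follow from the proposal as written.
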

\begin{proof}
Observe that the proof of Lemma $\ref{l4.1}$ also implies
\begin{equation}\label{4.19}
\| \nabla v(1) \|_{L^{2} + L^{\frac{2d}{d - 2}}} \lesssim  \epsilon \cdot \sup_{j < 0} 2^{j \frac{1 - s_{c}}{2}}  \| \nabla u \|_{L_{t}^{2} L_{x}^{\frac{2d}{d - 2}}([2^{j}, 2^{j + 1}] \times \mathbb{R}^{d})}.
\end{equation}
Interpolating $(\ref{4.19})$ with the bound
\begin{equation}\label{4.20}
\| v(1) \|_{\dot{H}^{s_{c}}} \lesssim \| u_{0} \|_{B_{1,1}^{\frac{d}{2} + s_{c}}},
\end{equation}
implies $\| v(1) \|_{L^{p + 1}} \lesssim 1$ for $\delta > 0$ sufficiently small, since $\epsilon = \epsilon(\| u_{0} \|_{B_{1,1}^{\frac{d}{2} + s_{c}}}, p, d, \delta)$.

Using the computations in $(\ref{2.11})$,
\begin{equation}\label{4.21}
\| (x + 2i 1 \nabla) \int_{0}^{1} e^{i(1 - \tau) \Delta} |u|^{p - 1} u d\tau \|_{L_{x}^{2}} \lesssim \| x |u|^{p - 1} u \|_{L_{t}^{1} L_{x}^{2}([0, 1] \times \mathbb{R}^{d})} + \| t \nabla u \|_{L_{t}^{2} L_{x}^{\frac{2d}{d - 2}}([0, 1] \times \mathbb{R}^{d})} \| u \|_{L_{t,x}^{\frac{(d + 2)(p - 1)}{2}}([0, 1] \times \mathbb{R}^{d})}^{p - 1}.
\end{equation}
Then by Lemma $\ref{l4.1}$,
\begin{equation}\label{4.22}
\| t \nabla u \|_{L_{t}^{2} L_{x}^{\frac{2d}{d - 2}}([0, 1] \times \mathbb{R}^{d})} \| u \|_{L_{t,x}^{\frac{(d + 2)(p - 1)}{2}}([0, 1] \times \mathbb{R}^{d})}^{p - 1} \lesssim \delta^{p - 1} \cdot \sup_{j < 0} 2^{j \frac{1 - s_{c}}{2}}  \| \nabla u \|_{L_{t}^{2} L_{x}^{\frac{2d}{d - 2}}([2^{j}, 2^{j + 1}] \times \mathbb{R}^{d})}.
\end{equation}

To handle the first term in $(\ref{4.21})$, consider the cases $\frac{1}{2} \leq s_{c} < 1$ and $0 < s_{c} < \frac{1}{2}$ separately. When $\frac{1}{2} \leq s_{c} < 1$, the radial Sobolev embedding theorem implies
\begin{equation}\label{4.23}
\| x |u|^{p - 1} u \|_{L_{t}^{1} L_{x}^{2}} \lesssim \| x^{\frac{2}{p - 1}} u \|_{L_{t,x}^{\infty}}^{\frac{p - 1}{2}} \| u \|_{L_{t}^{\infty} L_{x}^{\frac{d}{2}(p - 1)}}^{\frac{p - 1}{2} + (1 - c)} \| u \|_{L_{t,x}^{\frac{(d + 2)(p - 1)}{2}}}^{c} \lesssim \| u_{0} \|_{B_{1,1}^{\frac{d}{2} + s_{c}}}^{p - c} \delta^{c},
\end{equation}
where $c \searrow 0$ as $s_{c} \nearrow 1$.

When $0 < s_{c} < \frac{1}{2}$, using the radial Strichartz estimates,
\begin{equation}\label{4.24}
\| x |u|^{p - 1} u \|_{L_{t}^{1} L_{x}^{2}([0, 1] \times \mathbb{R}^{d})} \lesssim \| x^{\frac{d - 1}{2}} u \|_{L_{t}^{\frac{2}{\frac{1}{2} - s_{c}}} L_{x}^{\infty}}^{\frac{2}{d - 1}} \| u \|_{L_{t}^{\infty} L_{x}^{\frac{d}{2}(p - 1)}}^{p - 1 + (1 - \frac{2}{d - 1}) - c} \| u \|_{L_{t,x}^{\frac{(d + 2)(p - 1)}{2}}}^{c} \lesssim \| u_{0} \|_{B_{1,1}^{\frac{d}{2} + s_{c}}}^{p - c} \delta^{c},
\end{equation}
where $c > 0$ for all $0 < s_{c} < \frac{1}{2}$ and $d \geq 3$, with appropriate $p$.

This proves the Lemma.
\end{proof}

\section{Scattering for $(\ref{1.1})$ when $1 < p < 3$ and $0 < s_{c} < 1$}
Having obtained good bounds on the interval $[0, 1]$, we can use the pseudoconformal conservation of energy to extend these bounds to $[1, \infty)$.
\begin{theorem}\label{t6.1}
The initial value problem
\begin{equation}\label{6.1}
i u_{t} + \Delta u = |u|^{p - 1} u, \qquad u(0,x) = u_{0} \in B_{1,1}^{\frac{d}{2} + s_{c}}(\mathbb{R}^{d}), \qquad u : \mathbb{R} \times \mathbb{R}^{d} \rightarrow \mathbb{C},
\end{equation}
is globally well-posed and scattering when $u_{0}$ is radially symmetric. Moreover,
\begin{equation}\label{6.2}
\| u \|_{L_{t}^{\frac{p + 1}{1 - s_{c}}} L_{x}^{p + 1}(\mathbb{R} \times \mathbb{R}^{d})} \leq C (1 + \| u_{0} \|_{B_{1,1}^{\frac{d}{2} + s_{c}}}^{r}),
\end{equation}
for some $C$ that does not depend on $\| u_{0} \|_{B_{1,1}^{\frac{d}{2} + s_{c}}}$ and $r < \infty$.
\end{theorem}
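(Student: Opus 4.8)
The plan is to run the pseudoconformal-energy argument of Theorem \ref{t2.1} in the general setting, but now with $w$ equal to the \emph{full} linear evolution $e^{it\Delta}u_{0}$ as in $(\ref{4.17})$, so that every norm of $w$ is controlled by $\|u_{0}\|_{B_{1,1}^{\frac{d}{2}+s_{c}}}$ alone (via $(\ref{4.6})$, $(\ref{4.7})$, $(\ref{4.11})$ and interpolation) rather than by an $(\epsilon,u_{0})$-dependent cutoff radius; this is precisely what upgrades the qualitative statement to the polynomial bound $(\ref{6.2})$. Writing $\|u_{0}\|$ for $\|u_{0}\|_{B_{1,1}^{\frac{d}{2}+s_{c}}}$ throughout, I would first reduce by time reversal to $[0,\infty)$ and, using the scaling invariance of both $\|u_{0}\|$ and of the norm in $(\ref{4.1})$, rescale so that $(\ref{4.1})$ holds on $[0,1]$ with $\delta=\delta(\|u_{0}\|)$ small. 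Lemma \ref{l4.2} then gives the initial bound $\mathcal E(1)\lesssim 1$, and tracking the powers of $\|u_{0}\|$ in $(\ref{4.23})$--$(\ref{4.24})$ shows this constant is in fact polynomial in $\|u_{0}\|$.

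The heart of the matter is a differential inequality for $\mathcal E(t)$ on $[1,\infty)$. Writing $u=v+w$, the commutation $[i\partial_{t}+\Delta,\,x+2it\nabla]=0$ applied to $(\ref{4.16})$ gives $(i\partial_{t}+\Delta)(x+2it\nabla)v=(x+2it\nabla)N(v+w)$ with $N(z)=|z|^{p-1}z$, and the defocusing pseudoconformal computation yields
\begin{equation}
\frac{d}{dt}\mathcal E(t)=-\frac{4[d(p-1)-4]}{p+1}\,t\,\|v\|_{L^{p+1}}^{p+1}+\mathcal R(t),
\end{equation}
where the main term is nonpositive because $s_{c}>0$ forces $d(p-1)>4$ (for cubic $3$D this constant is $-2$, matching $(\ref{2.6})$), and where $\mathcal R(t)$ collects every contribution of $N(v+w)-N(v)$ to the analog of $(\ref{2.27})$. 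Since $p$ need not be an integer I cannot expand $N(v+w)$ algebraically as in $(\ref{2.26})$; instead I would use the pointwise bounds $|N(v+w)-N(v)|\lesssim |v|^{p-1}|w|+|w|^{p}$ and its gradient analog, so that $\mathcal R(t)$ is a finite sum of terms each carrying at least one factor of $w$.

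Next I would estimate $\mathcal R(t)$ as in $(\ref{2.28})$--$(\ref{2.36})$: pairing factors against $(x+2it\nabla)v$ and $xv$, using $\|(x+2it\nabla)v\|_{L^{2}}\le\mathcal E(t)^{1/2}$ and $\|v\|_{L^{p+1}}^{p+1}\lesssim t^{-2}\mathcal E(t)$, the radial weighted estimate (analog of $(\ref{2.19})$) for $\|\,|x|\,w\|_{L^{\infty}}$, and the decay $\|w\|_{L^{\infty}}\lesssim t^{-1/(p-1)}\|u_{0}\|$, $\|\nabla w\|_{L^{\infty}}\lesssim t^{-1/(p-1)-1/2}\|u_{0}\|$. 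The outcome is a bound $|\mathcal R(t)|\lesssim\sum_{k}A_{k}(t)\,\mathcal E(t)^{\theta_{k}}$ with each $\theta_{k}\le 1$ and each weight $A_{k}(t)$ a product of time powers and $w$-norms integrable against $t^{-2\beta}\,dt$, where $\beta:=1/(1-s_{c})$. Setting $X:=\int_{1}^{\infty}t^{-2\beta}\mathcal E(t)^{\beta}\,dt$, integrating, and applying Fubini, H\"older, and Young as in $(\ref{2.38})$--$(\ref{2.48})$ (introducing, if needed, an auxiliary lower-order quantity to decouple any residual cross term of the form $X^{1/2}(\cdots)^{1/2}$), I expect the closed inequality $X\lesssim P(\|u_{0}\|)+(\text{strictly sublinear powers of }X)$, whence $X\lesssim P(\|u_{0}\|)$. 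Finally $\mathcal E(t)\ge\frac{8t^{2}}{p+1}\|v\|_{L^{p+1}}^{p+1}$ gives $\|v\|_{L_{t}^{(p+1)/(1-s_{c})}L_{x}^{p+1}}^{(p+1)/(1-s_{c})}\lesssim X$, while $(\ref{1.15})$ and the $s_{c}$-admissibility of $(\frac{p+1}{1-s_{c}},p+1)$ give $\|w\|_{L_{t}^{(p+1)/(1-s_{c})}L_{x}^{p+1}}\lesssim\|u_{0}\|_{\dot H^{s_{c}}}\lesssim\|u_{0}\|$; the triangle inequality then yields $(\ref{6.2})$, and global well-posedness follows from Lemma \ref{l4.1} together with this global a priori scattering bound.

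The main obstacle I anticipate is closing this bootstrap with a \emph{polynomial} constant. Unlike Theorem \ref{t2.1} there is no small parameter $\epsilon$ from a spatial cutoff available to absorb a term of the form (coefficient)$\cdot X$, and unlike the cubic case there is no $L^{2}$ conservation law, since $B_{1,1}^{\frac{d}{2}+s_{c}}\not\subset L^{2}$ when $s_{c}>0$; any auxiliary estimate replacing $(\ref{2.44})$--$(\ref{2.47})$ must therefore be adapted to a setting where the data carries no $L^{2}$ control. The smallness needed to keep every self-referential term \emph{strictly} sublinear in $X$ must consequently come entirely from the time decay of $w=e^{it\Delta}u_{0}$ and from the hypothesis $p<3$, which is exactly the exponent threshold making the weights $A_{k}(t)$ integrable against $t^{-2\beta}$ and the powers $\theta_{k}$ strictly less than one; at $p=3$ this margin degenerates, which is why in that case the polynomial bound must instead be obtained through the concentration-compactness argument of Section 3.
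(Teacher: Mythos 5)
Your proposal follows essentially the same route as the paper's proof of Theorem \ref{t6.1}: the decomposition $(\ref{4.16})$--$(\ref{4.17})$ with $w=e^{it\Delta}u_{0}$, the pseudoconformal energy $(\ref{4.18})$ initialized by Lemma \ref{l4.2}, a differential inequality whose error terms carry at least one factor of $w$ controlled by radial Sobolev and dispersive estimates in the $B_{1,1}^{\frac{d}{2}+s_{c}}$ norm, and closure in the weighted norm $\|t^{-2}\mathcal E(t)\|_{L_{t}^{1/(1-s_{c})}}$ via H\"older/Young, followed by Strichartz for $w$. The only minor difference is that the paper needs no auxiliary lower-order quantity: the self-referential linear term is absorbed by a Cauchy--Schwarz weight $\delta(p,d)$ as in $(\ref{6.24})$--$(\ref{6.25})$, with the $1/\delta$ loss landing on $w$-terms that are polynomial in $\|u_{0}\|_{B_{1,1}^{\frac{d}{2}+s_{c}}}$.
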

\begin{remark}
When $\| u_{0} \|_{B_{1,1}^{\frac{d}{2} + s_{c}}}$ is small,
\begin{equation}\label{6.3}
\| u \|_{L_{t}^{\frac{p + 1}{1 - s_{c}}} L_{x}^{p + 1}(\mathbb{R} \times \mathbb{R}^{d})} \lesssim \| u_{0} \|_{\dot{H}^{s_{c}}} \lesssim \| u_{0} \|_{B_{1,1}^{\frac{d}{2} + s_{c}}}.
\end{equation}
So it suffices to consider $\| u_{0} \|_{B_{1,1}^{\frac{d}{2} + s_{c}}} \gtrsim 1$.
\end{remark}
\begin{proof}
If $v$ solves $(\ref{4.16})$ on $[1, \infty)$ with $w = 0$, $0 < s_{c} < 1$, and $\mathcal E(1) < \infty$, where $\mathcal E(t)$ is given by $(\ref{4.18})$, then by direct computation,
\begin{equation}\label{6.4}
\frac{d}{dt} \mathcal E(t) = -\frac{4}{p + 1} t \| v \|_{L^{p + 1}}^{p + 1} < 0,
\end{equation}
which implies scattering.

Now compute $\frac{d}{dt} \mathcal E(t)$ when $w$ need not be zero, but $w$ solves $(\ref{4.17})$ with $u_{0} \in B_{1,1}^{\frac{d}{2} + s_{c}}$, radially symmetric. Then by direct computation,
\begin{equation}\label{6.5}
\aligned
\frac{d}{dt} \mathcal E(t) = -\frac{4}{p + 1} t \| v \|_{L^{p + 1}}^{p + 1} - 2 \langle (x + 2it \nabla) v, i (x + 2it \nabla)(|v + w|^{p - 1} (v + w) - |v|^{p - 1} v) \rangle \\ - 8t^{2} \langle |v|^{p - 1} v, i (|v + w|^{p - 1} (v + w) - |v|^{p - 1} v) \rangle.
\endaligned
\end{equation}

When $\frac{1}{2} \leq s_{c} < 1$, by the radial Sobolev embedding theorem, since $\frac{p - 1}{2} < 1$,
\begin{equation}\label{6.6}
\aligned
- 2 \langle (x + 2it \nabla) v, i x (|v + w|^{p - 1} (v + w) - |v|^{p - 1} v) \rangle \\ \lesssim \| (x + 2it \nabla) v \|_{L^{2}} \| x^{\frac{2}{p - 1}} w \|_{L^{\infty}}^{\frac{p - 1}{2}} \| w \|_{L^{p + 1}}^{1 - \frac{p - 1}{2}} (\| v \|_{L^{p + 1}}^{p - 1} + \| w \|_{L^{p + 1}}^{p - 1}) \\
\lesssim \mathcal E(t)^{1/2} \| u_{0} \|_{B_{1,1}^{\frac{d}{2} + s_{c}}}^{\frac{p - 1}{2}} \| w \|_{L^{p + 1}}^{1 - \frac{p - 1}{2}} (\| v \|_{L^{p + 1}}^{p - 1} + \| w \|_{L^{p + 1}}^{p - 1}).
\endaligned
\end{equation}
When $0 < s_{c} < \frac{1}{2}$, split
\begin{equation}\label{6.7}
xw = (x + 2it \nabla) w - 2it \nabla w.
\end{equation}
Again by $(\ref{2.12})$ and the radial Sobolev embedding theorem, for $s_{c} < \frac{d}{2} - 1$, $\| y u_{0} \|_{\dot{H}^{s_{c} + 1}} \lesssim \| u_{0} \|_{\dot{H}^{s_{c}}}$, so interpolating the Strichartz estimate,
\begin{equation}\label{6.8}
\| e^{it \Delta} u_{0} \|_{L_{t}^{\frac{p + 1}{1 - s_{c}}} L_{x}^{p + 1}} \lesssim \| u_{0} \|_{\dot{H}^{s_{c}}},
\end{equation}
with the Littlewood--Paley projection estimate
\begin{equation}\label{6.9}
\| P_{j} e^{it \Delta} u_{0} \|_{L_{t,x}^{\infty}} \lesssim \| P_{j} u_{0} \|_{\dot{H}^{d/2}},
\end{equation}
implies that
\begin{equation}\label{6.10}
\| (x + 2it \nabla) w \|_{L_{t}^{\frac{2(p + 1)}{3 - p} \frac{1}{1 - s_{c}}} L_{x}^{\frac{2(p + 1)}{3 - p}}} \lesssim \| u_{0} \|_{B_{1,1}^{\frac{d}{2} + s_{c}}}.
\end{equation}
Finally, since $\frac{2}{p - 1} < \frac{d}{2}$, by the Sobolev embedding theorem, for $\frac{1}{q} = s_{c}$, by dispersive estimates and $s_{c} = \frac{d}{2} - \frac{2}{p - 1}$,
\begin{equation}\label{6.11}
\| |\nabla|^{\frac{2}{p - 1}} e^{it \Delta} u_{0} \|_{L_{x}^{\infty}} \lesssim \| e^{it \Delta} u_{0} \|_{B_{1, q}^{\frac{d}{2}}} \lesssim \frac{1}{t^{\frac{2}{p - 1}}} \| u_{0} \|_{B_{1,q'}^{\frac{d}{2}}} \lesssim \frac{1}{t^{\frac{2}{p - 1}}} \| u_{0} \|_{B_{1,1}^{\frac{d}{2} + s_{c}}}.
\end{equation}
Therefore,
\begin{equation}\label{6.12}
\| t^{\frac{2}{p - 1}} |\nabla|^{\frac{2}{p - 1}} w \|_{L^{\infty}}^{\frac{p - 1}{2}} \lesssim \| u_{0} \|_{B_{1,1}^{\frac{d}{2} + s_{c}}}^{\frac{p - 1}{2}}.
\end{equation}
Therefore, we have proved
\begin{equation}\label{6.13}
\| xw \|_{L_{t}^{\frac{2(p + 1)}{3 - p} \frac{1}{1 - s_{c}}} L_{x}^{\frac{2(p + 1)}{3 - p}}} \lesssim \| u_{0} \|_{B_{1,1}^{\frac{d}{2} + s_{c}}}.
\end{equation}

Next, integrating by parts,
\begin{equation}\label{6.14}
\aligned
- 2 \langle (2it \nabla) v, i (2it \nabla)(|v + w|^{p - 1} (v + w) - |v|^{p - 1} v) \rangle = -8t^{2} \langle \nabla v, i \nabla (|v + w|^{p - 1} (v + w) - |v|^{p - 1} v) \rangle \\
= 8t^{2} \langle \Delta v, i(|v + w|^{p - 1} (v + w) - |v|^{p - 1} v) \rangle.
\endaligned
\end{equation}
Summing, by $(\ref{4.16})$,
\begin{equation}\label{6.15}
\aligned
8t^{2} \langle \Delta v, i(|v + w|^{p - 1} (v + w) - |v|^{p - 1} v) \rangle - 8t^{2} \langle |v|^{p - 1} v, i (|v + w|^{p - 1} (v + w) - |v|^{p - 1} v) \rangle \\
= 8t^{2} \langle -i v_{t},  i(|v + w|^{p - 1} (v + w) - |v|^{p - 1} v) \rangle \\ - 8t^{2} \langle  (|v + w|^{p - 1} (v + w) - |v|^{p - 1} v), i (|v + w|^{p - 1} (v + w) - |v|^{p - 1} v) \rangle \\
= -8t^{2} \langle v_{t}, (|v + w|^{p - 1} (v + w) - |v|^{p - 1} v) \rangle.
\endaligned
\end{equation}
Finally, integrating by parts,
\begin{equation}\label{6.16}
\aligned
- 2 \langle xv, i (2it \nabla)(|v + w|^{p - 1} (v + w) - |v|^{p - 1} v) \rangle = 4t \langle xv, \nabla(|v + w|^{p - 1} (v + w) - |v|^{p - 1} v) \rangle \\
= -4td \langle v, (|v + w|^{p - 1} (v + w) - |v|^{p - 1} v) \rangle -4t \langle x \cdot \nabla v, (|v + w|^{p - 1}(v + w) - |v|^{p - 1} v) \rangle.
\endaligned
\end{equation}
Integrating by parts,
\begin{equation}\label{6.17}
-4t \langle x \cdot \nabla v, (|v + w|^{p - 1} (v + w) - |v|^{p - 1} v) \rangle = \frac{4dt}{p + 1} (\| v + w \|_{L^{p + 1}}^{p + 1} - \| v \|_{L^{p + 1}}^{p + 1}) + 4t \langle x \cdot \nabla w, |v + w|^{p - 1} (v + w) \rangle.
\end{equation}
Now then, summing,
\begin{equation}\label{6.18}
4t \langle x \cdot \nabla w, |v + w|^{p - 1} (v + w)  \rangle = 4t \langle (x + 2it \nabla) \cdot \nabla w, |v + w|^{p - 1} (v + w) \rangle - 8t^{2} \langle i \Delta w, |v + w|^{p - 1} (v + w) \rangle.
\end{equation}
Summing $(\ref{6.15})$ and $(\ref{6.18})$, since $w_{t} = i \Delta w$,
\begin{equation}\label{6.19}
(\ref{6.15}) + (\ref{6.18}) = 4t \langle (x + 2it \nabla) \cdot \nabla w, |v + w|^{p - 1} (v + w) \rangle - 8t^{2} \langle v_{t} + w_{t}, |v + w|^{p - 1} (v + w) \rangle + 8t^{2} \langle v_{t}, |v|^{p - 1} v \rangle.
\end{equation}
By the radial Sobolev embedding theorem, $(\ref{2.12})$, and the fact that $\frac{d}{2} > 1$,
\begin{equation}\label{6.20}
\| (x + 2it \nabla) \cdot \nabla w \|_{L_{t}^{\frac{p + 1}{1 - s_{c}}} L_{x}^{p + 1}} \lesssim \| u_{0} \|_{B_{1,1}^{\frac{d}{2} + s_{c}}}.
\end{equation}

Therefore, plugging these computations back into $(\ref{6.5})$,
\begin{equation}\label{6.21}
\aligned
\frac{d}{dt} \mathcal E(t) \lesssim -\frac{4}{p + 1} t \| v \|_{L^{p + 1}}^{p + 1} - \frac{8t^{2}}{p + 1} \frac{d}{dt} \| v + w \|_{L^{p + 1}}^{p + 1} + \frac{8t^{2}}{p + 1} \frac{d}{dt} \| v \|_{L^{p + 1}}^{p + 1} + t \| (x + 2it \nabla) \cdot \nabla w \|_{L^{p + 1}} \| v + w \|_{L^{p + 1}}^{p} \\
+ t \| v \|_{L^{p + 1}} \| w \|_{L^{p + 1}} (\| v \|_{L^{p + 1}}^{p - 1} + \| w \|_{L^{p + 1}}^{p - 1}) + t \| w \|_{L^{p + 1}}^{p + 1} + \mathcal E(t)^{1/2} \| (x + 2it \nabla)w \|_{L^{\frac{2(p + 1)}{3 - p}}} (\| v \|_{L^{p + 1}}^{p - 1} + \| w \|_{L^{p + 1}}^{p - 1}) \\
+ \mathcal E(t)^{1/2} \| u_{0} \|_{B_{1,1}^{\frac{d}{2} + s_{c}}}^{\frac{p - 1}{2}} \| w \|_{L^{p + 1}}^{1 - \frac{p - 1}{2}} (\| v \|_{L^{p + 1}}^{p - 1} + \| w \|_{L^{p + 1}}^{p - 1}).
\endaligned
\end{equation}
Then by the product rule,
\begin{equation}\label{6.22}
\aligned
\frac{d}{dt} [\mathcal E(t) + \frac{8t^{2}}{p + 1} \| v + w \|_{L^{p + 1}}^{p + 1} - \frac{8 t^{2}}{p + 1} \| v \|_{L^{p + 1}}] \\ \lesssim -\frac{4}{p + 1} t \| v \|_{L^{p + 1}}^{p + 1} + t \| (x + 2it \nabla) \cdot \nabla w \|_{L^{p + 1}} \| v + w \|_{L^{p + 1}}^{p}
+ t \| v \|_{L^{p + 1}}^{p} \| w \|_{L^{p + 1}} + t \| w \|_{L^{p + 1}}^{p + 1} \\ + \mathcal E(t)^{1/2} \| (x + 2it \nabla) w \|_{L^{\frac{2(p + 1)}{3 - p}}} (\| v \|_{L^{p + 1}}^{p - 1} + \| w \|_{L^{p + 1}}^{p - 1})
+ \mathcal E(t)^{1/2} \| u_{0} \|_{B_{1,1}^{\frac{d}{2} + s_{c}}}^{\frac{p - 1}{2}} \| w \|_{L^{p + 1}}^{1 - \frac{p - 1}{2}} (\| v \|_{L^{p + 1}}^{p - 1} + \| w \|_{L^{p + 1}}^{p - 1}).
\endaligned
\end{equation}

Since $\| v(1) \|_{L^{p + 1}} \lesssim 1$, $\mathcal E(1) \lesssim 1$, and dispersive estimates imply that $\| w(1) \|_{L^{p + 1}} \lesssim 1$, the Cauchy--Schwartz inequality and $(\ref{6.22})$ imply that
\begin{equation}\label{6.23}
\aligned
\frac{1}{t^{2}} \mathcal E(t) \lesssim \frac{1}{t^{2}} + \| w(t) \|_{L^{p + 1}} (\| v(t) \|_{L^{p + 1}}^{p} + \| w(t) \|_{L^{p + 1}}^{p}) + \frac{1}{t^{2}} \int_{1}^{t}  \tau \| (x + 2it \nabla) \cdot \nabla w \|_{L^{p + 1}}^{p + 1} d\tau \\
+ \frac{1}{t^{2}} \int_{1}^{t} \tau \| w \|_{L^{p + 1}}^{p + 1} + \frac{1}{t^{2}} \int_{1}^{t} \frac{\mathcal E(t)^{1/2}}{\tau} \cdot \tau  \| (x + 2it \nabla)w \|_{L^{\frac{2(p + 1)}{3 - p}}} (\| v \|_{L^{p + 1}}^{p - 1} + \| w \|_{L^{p + 1}}^{p - 1}) d\tau \\
+ \frac{1}{t^{2}} \int_{1}^{t} \frac{\mathcal E(\tau)^{1/2}}{\tau} \cdot \tau \| u_{0} \|_{B_{1,1}^{\frac{d}{2} + s_{c}}}^{\frac{p - 1}{2}} \| w \|_{L^{p + 1}}^{1 - \frac{p - 1}{2}} (\| v \|_{L^{p + 1}}^{p - 1} + \| w \|_{L^{p + 1}}^{p - 1}) d\tau,
\endaligned
\end{equation}
with implicit constants depending only on $p$ and $d$. Then choosing $0 < \delta(p, d) \ll 1$ sufficiently small, by the Cauchy--Schwartz inequality,
\begin{equation}\label{6.24}
\aligned
\frac{1}{t^{2}} \mathcal E(t) \lesssim \frac{1}{t^{2}} + \| w(t) \|_{L^{p + 1}} (\| v(t) \|_{L^{p + 1}}^{p} + \| w(t) \|_{L^{p + 1}}^{p}) + \frac{1}{t^{2}} \int_{1}^{t}  \tau \| (x + 2it \nabla) \cdot \nabla w \|_{L^{p + 1}}^{p + 1} d\tau \\
+ \frac{1}{t^{2}} \int_{1}^{t} \tau \| w \|_{L^{p + 1}}^{p + 1} + \frac{\delta}{t^{2}} \int_{1}^{t} \frac{\mathcal E(t)}{\tau^{2}} \tau d\tau + \frac{1}{\delta t^{2}} \int_{1}^{t} \tau  \| (x + 2it \nabla) w \|_{L^{\frac{2(p + 1)}{3 - p}}}^{2} (\| v \|_{L^{p + 1}}^{2(p - 1)} + \| w \|_{L^{p + 1}}^{2(p - 1)}) d\tau \\
+ \frac{1}{\delta t^{2}} \int_{1}^{t} \tau \| u_{0} \|_{B_{1,1}^{\frac{d}{2} + s_{c}}}^{p - 1} \| w \|_{L^{p + 1}}^{3 - p} (\| v \|_{L^{p + 1}}^{2(p - 1)} + \| w \|_{L^{p + 1}}^{2(p - 1)}) d\tau.
\endaligned
\end{equation}

Therefore, by Young's inequality,
\begin{equation}\label{6.25}
\aligned
\| \frac{1}{t^{2}} \mathcal E(t) \|_{L_{t}^{\frac{1}{1 - s_{c}}}([1, \infty))} \lesssim 1 + \| \| v(t) \|_{L^{p + 1}}^{p + 1} \|_{L_{t}^{\frac{1}{1 - s_{c}}}([1, \infty))}^{\frac{p}{p + 1}} \| \| w(t) \|_{L^{p + 1}}^{p + 1} \|_{L_{t}^{\frac{1}{1 - s_{c}}}([1, \infty))}^{\frac{1}{p + 1}} + \| \| w(t) \|_{L^{p + 1}}^{p + 1} \|_{L_{t}^{\frac{1}{1 - s_{c}}}([1, \infty))} \\
 + \| \| (x + 2it \nabla) \cdot \nabla w \|_{L^{p + 1}}^{p + 1} \|_{L_{t}^{\frac{1}{1 - s_{c}}}([1, \infty))} + \frac{1}{\delta} \| u_{0} \|_{B_{1,1}^{\frac{d}{2} + s_{c}}}^{p - 1} \| \| w(t) \|_{L^{p + 1}}^{p + 1} \|_{L_{t}^{\frac{1}{1 - s_{c}}}([1, \infty))} \\ + \frac{1}{\delta} \| u_{0} \|_{B_{1,1}^{\frac{d}{2} + s_{c}}}^{p - 1} \| \| w(t) \|_{L^{p + 1}}^{p + 1} \|_{L_{t}^{\frac{1}{1 - s_{c}}}([1, \infty))}^{\frac{3 - p}{p + 1}} \| \| v(t) \|_{L^{p + 1}}^{p + 1} \|_{L_{t}^{\frac{1}{1 - s_{c}}}([1, \infty))}^{\frac{2(p - 1)}{p + 1}} \\
+ \frac{1}{\delta} \| (x + 2it \nabla) w \|_{L_{t}^{\frac{2(p + 1)}{3 - p} \frac{1}{1 - s_{c}}} L_{x}^{\frac{2(p + 1)}{3 - p}}}^{2} \| \| v(t) \|_{L^{p + 1}}^{p + 1} \|_{L_{t}^{\frac{1}{1 - s_{c}}}([1, \infty))}^{\frac{2(p - 1)}{p + 1}} \\ + \frac{1}{\delta} \| (x + 2it \nabla) w \|_{L_{t}^{\frac{2(p + 1)}{3 - p} \frac{1}{1 - s_{c}}} L_{x}^{\frac{2(p + 1)}{3 - p}}}^{2} \| \| w(t) \|_{L^{p + 1}}^{p + 1} \|_{L_{t}^{\frac{1}{1 - s_{c}}}([1, \infty))}^{\frac{2(p - 1)}{p + 1}}.
\endaligned
\end{equation}
Then combining $\| v(t) \|_{L^{p + 1}}^{p + 1} \lesssim \frac{1}{t^{2}} \mathcal E(t)$, Strichartz estimates, $(\ref{6.6})$--$(\ref{6.13})$, and $(\ref{6.20})$,
\begin{equation}\label{6.26}
\| \frac{1}{t^{2}} \mathcal E(t) \|_{L_{t}^{\frac{1}{1 - s_{c}}}([1, \infty))} \lesssim_{p, d} 1 + \| u_{0} \|_{B_{1,1}^{\frac{d}{2} + s_{c}}}^{p + 1} + \| u_{0} \|_{B_{1,1}^{\frac{d}{2} + s_{c}}}^{2p} + \| u_{0} \|_{B_{1,1}^{\frac{d}{2} + s_{c}}}^{\frac{2(p + 1)}{3 - p}}.
\end{equation}
This proves the theorem.
\end{proof}

\section*{Acknowledgements}
The author was supported by NSF grant DMS-$1764358$ during the writing of this paper. The author is grateful to Frank Merle for many helpful conversations regarding this problem.

\bibliography{biblio}
\bibliographystyle{plain}
\medskip

\end{document}